\providecommand{\U}[1]{\protect\rule{.1in}{.1in}}
\newtheorem{theorem}{Theorem}
\newtheorem{corollary}[theorem]{Corollary}
\newtheorem{lemma}[theorem]{Lemma}
\newtheorem{proposition}[theorem]{Proposition}
\newtheorem{remark}[theorem]{Remark}
\newcommand{\R}{\mathbb{R}}
\newcommand{\Z}{\mathbb{Z}}
\newcommand{\N}{\mathbb{N}}
\newcommand{\C}{\mathbb{C}}
\newcommand{\bydef}{\,\stackrel{\mbox{\tiny\textnormal{\raisebox{0ex}[0ex][0ex]{def}}}}{=}\,}
\newcommand{\bx}{{\bar{x}}}
\newcommand{\bu}{{\bar{u}}}
\newcommand{\bv}{{\bar{v}}}
\newcommand{\bw}{{\bar{w}}}
\newcommand{\tx}{{\tilde{x}}}
\newcommand{\tu}{{\tilde{u}}}
\newcommand{\dagA}{A^\dagger}
\newcommand{\real}{{\textbf{real}}}
\newcommand{\inc}{\bm{\iota}^{(m)}}
\begin{document}

\date{}

\title{Torus knot choreographies in the $n$-body problem}

\author{
Renato Calleja\thanks{IIMAS, 
Universidad Nacional Aut\'onoma de M\'exico, Apdo. Postal 20-726, C.P. 01000, México D.F., México. {\tt calleja@mym.iimas.unam.mx}}
\and
Carlos Garc\'{\i}a-Azpeitia\thanks{IIMAS, Universidad Nacional Aut\'onoma de M\'exico, Apdo. Postal 20-726, C.P. 01000, México D.F., México. {\tt cgazpe@ciencias.unam.mx}}
\and
Jean-Philippe Lessard\thanks{McGill University, Department of Mathematics and Statistics, 805 Sherbrooke Street West, Montreal, QC, H3A 0B9, Canada. {\tt jp.lessard@mcgill.ca}}
\and
J.D. Mireles James \thanks{Florida Atlantic University, Department of Mathematical Sciences, 777 Glades Road, Boca Raton, FL 33431, USA. {\tt jmirelesjames@fau.edu}}
}

\maketitle

\begin{abstract}
We develop a systematic approach for proving the existence of
choreographic solutions in the gravitational $n$ body problem.
Our main focus is on spatial torus knots: that is, periodic motions where 
the positions of all $n$ bodies follow a single closed which winds around a $2$-torus
in $\mathbb{R}^3$. 
After changing to rotating coordinates and exploiting symmetries, the equation of a
choreographic configuration is reduced to a delay differential equation (DDE)
describing the position and velocity of a single body. We study periodic
solutions of this DDE in a Banach space of rapidly decaying Fourier
coefficients. Imposing appropriate constraint equations lets us isolate
choreographies having prescribed symmetries and topological properties. Our
argument is constructive and makes extensive use of the digital computer. We
provide all the necessary analytic estimates as well as a working implementation
for any number of bodies. We illustrate the utility of the approach by
proving the existence of some spatial choreographies for $n=4,5,7$,
and $9$ bodies.
\end{abstract}

\section{Introduction}

\label{sec:intro}

A \emph{choreography} is a periodic solution of the gravitational $n$-body
problem, where $n$ equal masses follow the same path. Circular
choreographies with masses located at the vertices of
a regular $n$-gon were already studied by
Lagrange in the Eighteenth Century.  The first choreography differing from 
a polygon was discovered by
Moore in \cite{Mo93} and has three bodies moving around the now famous
figure-eight. Chenciner and Montgomery in \cite{ChMo00} gave a rigorous
mathematical proof of the existence of this figure eight orbit by minimizing
the action for Newton's equation. The name \textit{choreographies} was adopted
after the work of Sim\'o \cite{Si00} on numerical computation
of choreographic solutions.

The variational approach to the existence of choreographies consists of
finding critical points of the classical Newtonian action subject to
appropriate symmetry constraints. The main obstacle to this approach is the
existence of paths with collisions. Terracini and Ferrario in \cite{FeTe04}
gave conditions on the symmetries which imply that a minimizer is free of
collisions (this is called the rotating circle property). Although a lot of
simple choreographies have been found numerically since Sim\'o \cite{Si00},
rigorous proofs using only analytical methods are difficult. Notable
exceptions include works on: the figure-eight of three bodies \cite{ChMo00},
the rotating $n$-gon \cite{BT04}, the figure-eight type for odd bodies
\cite{FeTe04} and the super-eight of four bodies \cite{Sh14}. Other
variational approaches related to existence of planar choreographies can be
found in \cite{BaTe04,Ch03,Fe06,FePo08,TeVe07,Wa16} and the references therein.

The difficulties just mentioned have led some authors
to develop mathematically rigorous computer assisted proofs (CAPs) for
choreographies. This is a natural alternative to pen-and-paper
analysis since both the discovery and many subsequent studies of
choreographies employ numerical methods. The interested reader will want to
consult for example the works of Kapela, Sim\'{o}, and Zgliczy\'nski
\cite{KaSi07,KaSi17,KaZg03} for both CAPs of existence for planar
choreographies and mathematically rigorous stability analysis. See also Remark
\ref{rem:comparisons} below.

Recall now that a $(p, q)$-\textit{torus knot} is an embedding of $\mathbb{S}^1$ into 
a two torus $\mathbb{T}^2 \subset \mathbb{R}^3$, winding 
$p$ times around one generating circle of the torus and $q$ times around 
the other, with $p$ and $q$ coprime and neither equal to zero.  
The embedding of the two torus 
is required to be unknotted in $\mathbb{R}^3$.  A torus knot may or 
may not be a trivial when viewed as a knot in $\mathbb{R}^3$.
Indeed, it is trivial if and only if either $p$ or $q$ is equal to $\pm 1$.
The idea is illustrated in Figure \ref{fig:knots}.

\begin{figure}[t!]
\centering
\includegraphics[width = 5in]{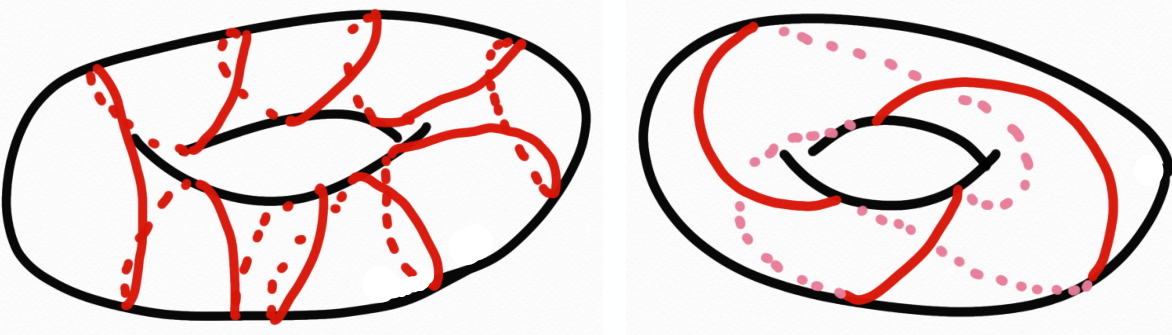} \caption{
\textbf{Spatial torus knots}: given an unknotted 
$2$-torus embedded in $\mathbb{R}^3$, a $(p,q)$-torus knot is a 
non-contractible curve embedded into the surface of the torus.
The curve then winds $p$ and $q$ times respectively around the 
generating circles of the torus (with $p$ and $q$ co-prime). 
It is a basic result that a $(p,q)$ torus knot is trivial 
as a knot in $\mathbb{R}^n$ if and 
only if either $p$ or $q$ is $\pm 1$.
The left frame illustrates a torus knot which is a trivial knot 
in $\mathbb{R}^3$, while the right frame illustrates a 
non-trivial $(3,2)$-knot -- in fact a trefoil.}
\label{fig:knots}%
\end{figure}

A difficult problem in this area is to prove the existence 
of spatial torus knot choreographies. Indeed when both topological and
symmetric constraints are involved, it is difficult to prove the coercitivity of
the action. For this reason few results with topological constraints are
available. A notable exception is a torus knot
choreography for $3$-bodies obtained by Arioli, Barutello, and Terracini
in \cite{MR2259202}, where the authors localize
a mountain pass solution of the Newtonian action in a rotating frame.
Again the result is obtained by means of CAP, not variational methods.
In general it is hard to
determine whether a critical point of the action is a spatial torus-knot choreography.
We provide a systematic procedure to obtain countable families of
torus knots for any number of bodies. 

\textbf{Contribution:} 
\emph{The main result of the present work is 
to give mathematically rigorous existence proofs for  {\em$(p,q)$}-torus knot
choreographies in the {\em$n$}-body problem for several different 
values of $n$.} 

Our approach is functional analytic 
(a choreography is a zero of a nonlinear operator posed on a Banach space) 
and computer-assisted.  When it succeeds it produces countably many verified results.
For example we establish the existence of the $5$-body trefoil knot choreography illustrated 
in Figure \ref{fig:trefoil}, 
and the existence of countable many choreographies close to it. 
We describe the pen and paper estimates for any number of bodies and, while we 
illustrate the method for only few 
explicit examples, our setup and resulting implementation apply (in principle) to 
any spatial choreography.

\begin{figure}[t!]
\centering
\includegraphics[width = 4.5in]{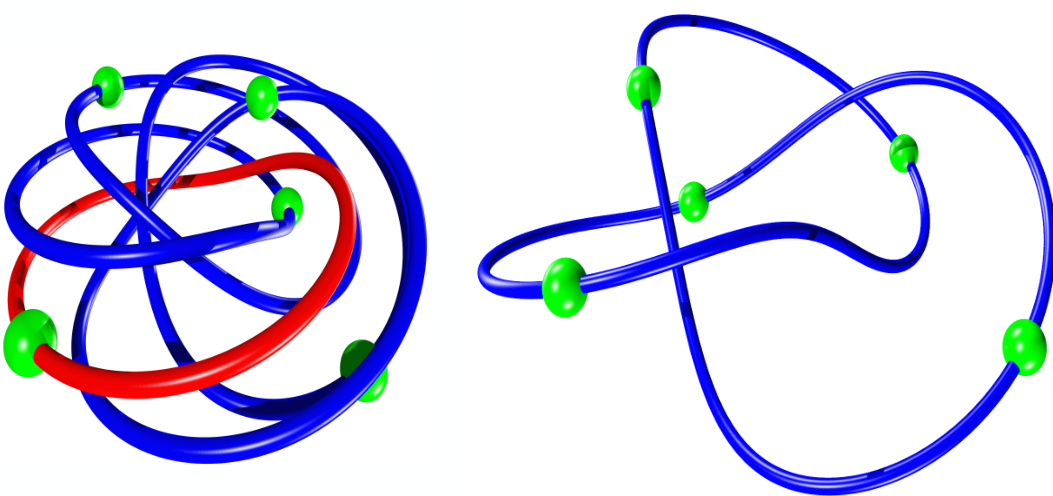} \caption{Example of a spatial
trefoil choreography for 5 bodies: Left frame (rotating coordinates) the red
loop illustrates the periodic orbit of the delay differential equation whose
existence we prove using the methods of the present work. The four remaining
loops are obtained by symmetry, giving a periodic orbit of the full 5 body
problem in rotating coordinates. Right frame (inertial coordinates) the 5 body
orbit converted to rotating coordinates. The result is a spatial torus knot
with the topology of a trefoil. }%
\label{fig:trefoil}%
\end{figure}

Before describing our approach in detail we recall several related developments.  
In \cite{ChFe08} it is observed that choreographies
appear in dense sets along the vertical Lyapunov families attached to the
relative equilibrium solutions given by the planar $n$-gon . Existence of vertical Lyapunov families
follows from the Weinstein-Moser theory and, when the frequency varies
continuously, the authors obtain the existence of an infinite number of
choreographies along these vertical families. This hypothesis however
has been verified only for some families with $n=3,4,5,6$ and even though
similar computations can be carried out for other values of $n$, it is an open
problem to establish the hypothesis for all $n$.

The existence of global Lyapunov families arising from the polygonal relative
equilibrium of the rotating problem 
was established in \cite{GaIz11,GaIz13} for all $n$.  By saying that these 
families are global what we mean that, in the space of normalized $2\pi$ periodic
solutions, the families form a continuum set with at least one of the
following properties: either the Sobolev norm of the orbits in the family goes
to infinity, the period of the orbits goes to infinity, the family ends in an
orbit with collision, or the family returns to another equilibrium solution.
This fact is proved using $G$-equivariant degree theory \cite{IzVi03} where
$G=\mathbb{Z}_{n}\times\mathbb{Z}_{2}\times SO(2)\times S^{1}$ acts as
permutations, $z$-reflection and $(x,y)$-rotations of bodies, and time shift
respectively. In addition the analysis of \cite{GaIz11,GaIz13} concludes that
the Lyapunov families have the symmetries of a twisted subgroup of $G$.

Specifically, let $(w_{j},z_{j})\in\mathbb{C}\times\mathbb{R}$ represents the
planar and spatial coordinates of the $j$-th body in a rotating coordinate
frame with frequency $\sqrt{s_{1}}$, where
\begin{equation}
s_{1}=\frac{1}{4}\sum_{j=1}^{n-1}\frac{1}{\sin(j\zeta/2)},\qquad\zeta
=\frac{2\pi}{n}. \label{s1}%
\end{equation}
The $n$-polygon consisting of $n$ bodies on the unit circle $w_{j}=e^{ij\zeta}$ is an
equilibrium solution of Newton's equations in a co-rotating frame.
 After normalizing the period to
$2\pi$, the planar Lyapunov families arising from this equilibrium polygon
have the planar symmetries,
\begin{equation}
w_{j}(t)=e^{ij\zeta}w_{n}(t+jk\zeta), \label{SyP}%
\end{equation}
and the spatial symmetries%
\begin{equation}
z_{j}(t)=z_{n}(t+jk\zeta). \label{SyS}%
\end{equation}
For $1 \leq j \leq n-1$ the $j$-th body follows 
an identical path as the $n$-th body, after
a rotation in space and a shift in time. It is proved in \cite{GaIz13} that
taking $k=2,.., n-2$ in the planar case 
gives the $n-3$ planar Lyapunov families, and that taking
$k=1,...,n-1$ in the spatial case gives the $n-1$ vertical Lyapunov families.

We stress that the $G$-equivariant degree theory provides only an alternative
concerning the global behavior of the Lyapunov families. Without additional
information we do not know \textit{what actually happens along a given
branch}. This question is considered in \cite{CaDoGa18}, where the authors 
conduct a numerical exploration of the
global behavior of the Lyapunov families using the software package AUTO (e.g.
see \cite{MR635945}). 

Let $p, q \in \mathbb{Z}$ be relatively prime such that $kq-p\in n\mathbb{Z}$.
It is proved in \cite{CaDoGa18} that an orbit with the 
symmetries defined in equations \eqref{SyP} and
\eqref{SyS}, and frequency
\begin{equation}
\omega=\sqrt{s_{1}}\frac{p}{q}, \label{di}%
\end{equation}
is a \emph{simple choreography} when converted back to 
the inertial reference frame. In the case
that $p$ and $q$ do not satisfy this diophantine equation, the solution in the
inertial frame corresponds to a \emph{multiple choreographic solution}
\cite{Ch03}, while the case that $\omega/\sqrt{s_{1}}$ is irrational implies
that the solution is quasiperiodic. Since the set of rational numbers $p/q$
satisfying the diophantine relation (\ref{di}) is dense, one has the
following: when the frequency $\omega$  varies
continuously along the Lyapunov family, there are infinitely many orbits 
in the rotating frame that
correspond to simple choreographies in the inertial frame.

The authors of \cite{CaDoGa18} give compelling numerical evidence 
which suggests that an axial family of solutions appears after a symmetry-breaking
bifurcation from the vertical Lyapunov family in the rotating $n$-body problem. 
The numerics suggest that this axial family has the symmetries of equations 
\eqref{SyP} and \eqref{SyS}. It is shown further in the same reference
that, if the hypothesized axial family exists, then orbits
in this family correspond  to choreographies 
in the inertial frame which wind $p$ and $q$ times 
around the generators of a $2$-torus.
That is, the periodic orbits in this alleged axial family 
give rise to $(p,q)$-torus knot choreographies for the $n$-body problem. 

A more refined description of our contribution is that we prove the 
existence of this axial family.
Using the symmetries (\ref{SyP}, \ref{SyS}) in Newton's laws 
we reduced the equations of motion to a single equation 
describing the motion of the $n$-th body
$u_{n}=(w,z)\in\mathbb{C\times R}$.  
The equation is a delay differential equation (DDE) 
with multiple constant delays.  More explicitly, we have
\begin{equation}%
\begin{array}
[c]{rl}%
\hspace{-0.24cm}\ddot{w}(t)+2\sqrt{s_{1}}i\dot{w}(t) & \hspace{-0.25cm}%
=\displaystyle s_{1}w(t)-\sum_{j=1}^{n-1}\frac{w(t)-e^{ij\zeta}w(t+jk\zeta
)}{\left(  \left\vert w(t)-e^{ij\zeta}w(t+jk\zeta)\right\vert ^{2}+\left\vert
z(t)-z(t+jk\zeta)\right\vert ^{2}\right)  ^{3/2}}\\
\ddot{z}(t) & \hspace{-0.25cm}=\displaystyle-\sum_{j=1}^{n-1}\frac
{z(t)-z(t+jk\zeta)}{\left(  \left\vert w(t)-e^{ij\zeta}w(t+jk\zeta)\right\vert
^{2}+\left\vert z(t)-z(t+jk\zeta)\right\vert ^{2}\right)  ^{3/2}}.
\end{array}
\label{eq:reduced_equations}%
\end{equation}
For any number of bodies, these reduced equations \eqref{eq:reduced_equations}
represents a system of six scalar equations with multiple constant delays.

Our computer assisted arguments are in the functional analytic tradition of
Lanford, Eckamnn, Koch, and Wittwer
\cite{MR648529,MR759197,MR883539,MR727816}, and build heavily on the 
earlier work of \cite{Le10,MR2871794,Le18} on DDEs. 
More precisely, we formulate the existence proofs on a Banach
space of Fourier coefficient sequences. The delay operator acts as a
multiplicative (diagonal) operator in Fourier coefficient space, and the
regularity of periodic solutions translates into rapid decay of the Fourier
coefficients. Indeed, as was shown in \cite{MR0322315}, a periodic
solution of a delay differential equation with analytic nonlinearity is
analytic when the delays are constant. Then we know a-priori that the Fourier
coefficients of a periodic solution of Equations \eqref{eq:reduced_equations}
decay exponentially fast.

An important feature of Equations \eqref{eq:reduced_equations} is the
conservation of energy, which allows us to fix a desired frequency for the
periodic solution \emph{a-priori}. This reduction greatly simplifies the
analysis of the delay differential equation in Fourier space, but requires
adding an unfolding parameter to balance the system. In addition we utilize
\textit{automatic differentiation} as in
\cite{Mi18,jp_jaime_j,jp_chris_jb_DDE}, and reformulate
\eqref{eq:reduced_equations} as a problem with polynomial nonlinearities. The
polynomial problem is amenable to straight forward analysis exploiting the
Banach algebra properties of the solution space and we use the FFT
algorithm as in \cite{MR3833658}. The cost of this simplification is that each
additional body augments the system with a single additional scalar equation
and a single additional unfolding parameter. Finally we validate the existence
of solutions by means of a Newton-Kantorovich argument exploiting 
the \emph{radii polynomial approach} as in \cite{LeMi16}.


We conclude this introduction by mentioning some interesting problems for future study. 
The zero finding problem studied in the present work is amenable to validated
continuation techniques as discussed in
\cite{Le18,MR2630003,MR2338393,MR3125637}. A follow up study will investigate
global properties of continuous families of spatial choreographies in the $n$ body problem, and
study bifurcations encountered along the branches. In this way we hope to
prove for example the conjecture of Marchal/Chenciner \cite{ChFe08} that the Lagrange
triangle is connected with the figure-eigth choreography trough Marchal's P-12
family \cite{Ma00}.  We also remark that all the choreographies shown to exist in the 
present work are unstable. Actually, the only known stable choreographies are close to the 
figure eight for $n=3$. Stability of torus knots in the $n=3$ is being investigated 
in a forthcoming paper.

Let us also mention that 
the procedure developed in this paper could be adapted to prove existence of 
asymmetric planar or spatial choreographies. These choreographies do appear in dense 
sets of symmetry-breaking families from planar and spatial Lyapunov families. 
Furthermore, this procedure could be adapted to study choreography solutions in problems with 
other potentials, such as $r^{-\alpha }$\ (with $\alpha =1$\ being the
gravitational case, $\alpha <1$\ the weak force case, and $\alpha >2$\ the
strong force case). It could also be adapted to
Hamiltonian systems with different radial potentials, as long as the
polynomial embedding (see Section~\ref{eq:automatic_differentiation}) can be done. 
An interesting problem would be to adapt the method to validate choreographies
in families that bifurcate from the polygonal equilibrium in DNLS equations
\cite{CaGaDo_2} or the $n$-vortex problem on the plane, disk, or sphere 
\cite{CaGaDo_1}.

\begin{remark}
[CAPs in celestial mechanics and dynamics of DDEs]\label{rem:CAP}
\emph{Numerical calculations have been central to the development of celestial
mechanics since the late Nineteenth and early Twentieth centuries. The reader
interested in historical developments before the age of the digital computer
can consult the works of George Darwin, Francis Ray Moulton, and the group in
Copenhagen led by Elis Str\"{o}mgren \cite{MR1554890,MR0094486,Stromgren}.
Problems in celestial navigation and orbit design helped drive the explosion
of scientific computing during the space race of the mid Twentieth century. A
fascinating account and a much more complete bibliography are found in the book
\cite{theoryOfOrbits}.}

\emph{As researchers developed computer assisted methods of proof for
computational dynamics it was natural to look for challenging open
problems in celestial mechanics. The relevant literature is rich and
we direct the interested reader to the works of \cite{MR2112702,
MR1947690,MR2259202, MR2174417, MR1961956} for a much more
complete view of the literature. Other authors have studied center manifolds
\cite{MR2902618}, transverse intersections of stable/unstable manifolds
\cite{Mi18,MR3032848}, Melnikov theory \cite{MR3567489},
Arnold diffusion and transport \cite{MR2785975,MR2824484,MR3038221}, and
existence/continuation/bifurcation of Halo orbits
\cite{jp_jaime_j,danielContinuation} -- all in gravitational $n$-body problems
and all using computer assisted arguments. Especially relevant to the present
work are the computer assisted existence and KAM stability proofs for $n$-body
choreographies in \cite{KaSi07,KaSi17,KaZg03,MR2259202}. (See also Remark
\ref{rem:comparisons} below). Again, the references given in the preceding
paragraph are meant only to point the reader in the direction of the relevant
literature. A more complete view of the literature is found in the references
of the cited works.}

\emph{The present work grows out of the existing
literature on CAPs for dynamics of DDEs, the foundations of which were laid in
\cite{Le10}. The work just cited studied periodic solutions -- as
well as branches of periodic solutions -- for scalar DDEs with a single delay and 
polynomial nonlinearities.
Extensions to multiple delays appear in \cite{MR2871794}, and more 
recent work  considers systems of DDEs with non-polynomial
nonlinearities \cite{jp_chris_jb_DDE}. The interested reader can consult
the works of \cite{Le18,jonopJP_Kon,minamoto,MR2177094} for more complete
discussion of this area. We mention also the recent Ph.D. thesis of Jonathan
Jaquette, who settled the decades old conjectures of Wright and
Jones about the global dynamics of Wright's equation
\cite{jono_jones,MR3779642} using ideas from this field. Another approach to
computer assisted proof for periodic orbits of DDEs -- based on rigorous
integration of the induced flow in function space -- is found in
\cite{piotrDDE}. }

\emph{In spite of the picture painted above, computer assisted methods of
proof are regularly applied outside the boundaries of celestial mechanics and
delay differential equations. For a broader perspective on the area, still
focusing on nonlinear dynamics, we refer to the review articles
\cite{notices_jb_jp,Jay_Kons_Review} and to the book of Tucker
\cite{MR2807595}.}
\end{remark}

\begin{remark}
[Phase space and functional analytic approaches]\label{rem:comparisons} \emph{
The existence proofs for planar
choreographies in \cite{KaSi07} and \cite{KaZg03}, the proof of the spatial 
mountain pass solution in \cite{MR2259202}, and the proof of KAM
stability of the figure eight choreography in \cite{KaSi17}
use a different setup from that developed in the present work. 
More precisely,  the works just mentioned study directly the Newtonian equations of 
motion in phase space. The works of \cite{KaSi07,KaZg03,KaSi17}
exploit the powerful CAPD library for rigorous integration of ODEs to
construct mathematically rigorous arguments in appropriate Poincar\'e
sections. See \cite{capdReference,MR1930946} for more complete discussion and references to
the CAPD library. 
The work of  \cite{MR2259202} utilizes a functional analytic method akin to
that of the present work, but applied directly to periodic orbits for the
Hamiltonian vector field rather than reducing to the delay differential equation as
in the present work.  
}

\emph{In the case of the planar choreography problem the phase space is of
dimension $4n$, while the spatial choreography problem scales like $6n$. 
These figures are in some sense conservative, as applying the topological arguments of
\cite{KaSi07,KaZg03} require integration of the equations of first variation
(and equations of higher variation in the case of the KAM stability argument).
}

\emph{The setup of the present work considers six scalar
equations, independent of the number of bodies considered. This is
a dramatic reduction of the dimension of the problem. This dimension
reduction facilitates consideration of -- in principle -- choreographies
involving any number of bodies. A technical remark is that
our implementation uses automatic
differentiation to reduce to a polynomial nonlinearity, adding one additional
scalar equation for each body being considered. This brings our count to
$6+(n-1)$ scalar equations. While this quantity scales with $n$ much better than the
$6n$ mentioned above, we stress that our implementation could be improved
using techniques similar to those discussed in
\cite{MR2259202,MR3124898,jordiAlexKAM} for evaluation of non-polynomial
nonlinearities on Fourier data. With such an improvement our approach would 
consider only 6 scalar equations no matter the number of bodies.}

\emph{For the sake of simplicity we do not pursue this option at the present
time, as we believe that the reduction to a polynomial nonlinearity makes both the
presentation and implementation of the method more transparent. We also believe 
that the polynomial
version of the problem is more amenable to high order branch following
methods and bifurcation analysis to be pursued in a future work. We remark that,
since we work in a space of analytic functions, our argument produces useful
by-products such as bounds on coefficient decay rates, and lower bounds on the
domain of analyticity/bounds on the distances to poles in the complex plane.
This information can be used to obtain a-posteriori bounds on derivatives via
the usual Cauchy bounds of complex analysis. }
\end{remark}

The paper is organized as follows. In Section~\ref{sec:F(x)=0}, we introduce
the \emph{Fourier map} $F:X \to Y$ defined on a Banach space $X$ of
geometrically decaying Fourier coefficients, whose zeros are choreographies
having prescribed symmetries and topological properties. In
Section~\ref{sec:a-posteriori-validation}, we introduce the ideas of the
a-posteriori validation for the Fourier map, that is on how to demonstrate the
existence of true solutions of $F(x)=0$ close to numerical approximations. In
Section~\ref{sec:bounds}, we present explicit formulas for the bounds
necessary to apply the a-posteriori validation of
Section~\ref{sec:a-posteriori-validation}. We conclude the paper by presenting
the results in Section~\ref{sec:results}, where we present proofs of existence
of some spatial torus knot choreographies for $n=4,5,7$, and $9$ bodies.
The computer programs used in the paper are available at \cite{webpage}.

\section{Formulation of the problem}

\label{sec:F(x)=0}

Let $q_{j}(t)\in\mathbb{R}^{3}$ be the position in space of the body
$j\in\{1,...,n\}$ with mass $1$ at this $t$. Define the matrices
\[
\bar{I}=diag(1,1,0)\text{ and }\bar{J}=diag(J,0)\text{,}%
\]
where $J$ is the symplectic matrix in $\mathbb{R}^{2}$. In rotating
coordinates and with the period rescaled to $2\pi$,
\[
q_{j}(t)=e^{\sqrt{s_{1}}t\bar{J}}u_{j}(\omega t),
\]
the Newton equations for the $n$ bodies are%
\begin{equation}
\omega^{2}\ddot{u}_{j}+2\omega\sqrt{s_{1}}\bar{J}\dot{u}_{j}-s_{1}\bar{I}%
u_{j}=-\sum_{i=1(i\neq j)}^{n}\frac{u_{j}-u_{i}}{\left\Vert u_{j}%
-u_{i}\right\Vert ^{3}},\label{Ne}%
\end{equation}
where $\omega$ is the frequency and $s_{1}$ is defined by (\ref{s1}).

Using that $u_{j}=(w_{j},z_{j})$, the symmetries (\ref{SyP}) and (\ref{SyS})
correspond to the symmetry
\begin{equation}
u_{j}(t)=e^{\ j\bar{J}\zeta}u_{n}(t+jk\zeta)\text{.} \label{Sy}%
\end{equation}
Therefore, the solutions of the equation (\ref{Ne}) with symmetries (\ref{Sy})
are zeros of the map
\begin{equation}
\mathcal{G}(u_{n},\omega) \bydef \omega^{2}\ddot{u}_{n}+2\omega\sqrt{s_{1}}\bar
{J}\dot{u}_{n}-s_{1}\bar{I}u_{n}+\sum_{j=1}^{n-1}\frac{u_{n}-e^{\ j\bar
{J}\zeta}u_{n}(t+jk\zeta)}{\left\Vert u_{n}-e^{\ j\bar{J}\zeta}u_{n}%
(t+jk\zeta)\right\Vert ^{3}}:X\times\mathbb{R}\rightarrow Y
\label{eq:DDE_formulation}%
\end{equation}
defined in spaces $X$ and $Y$ of analytic $2\pi$-periodic functions, which we
will specify later in Fourier components. The equation $\mathcal{G}%
(u_{n},\omega)=0$, with $\mathcal{G}$ defined in \eqref{eq:DDE_formulation} is
a delay differential equation (DDE).

\subsection{Choreographies}

We say that a solution of $\mathcal{G}(u_{n},\omega )=0$, i.e. a solution of
the $n$-body problem with symmetry (\ref{Sy}), is $p:q$\emph{\ resonant }%
when it has frequency $\omega =\sqrt{s_{1}}p/q$ and (a) $kq-p=0$ or (b) $p
$ and $q$ are relatively prime and $kq-p\in n\mathbb{Z}$.  In \cite{CaDoGa18}
is proven that $p:q$ resonant orbits are choreographies in the inertial
frame, see also \cite{ChFe08}. For sake of completeness, here we reproduce a
short version of this result.

\begin{proposition}
\label{proposition} Let%
\[
Q_{j}(t)\bydef q_{j}(t/\omega )=e^{t\bar{J}\sqrt{s_{1}}/\omega }u_{j}(t)
\]%
be a reparameterization of a periodic solution in the inertial frame. An $p:q
$ resonant solution $u_{n}\ $of $\mathcal{G}(u_{n},\omega )=0$ is a
choreography in inertial frame, satisfying that $Q_{n}(t)$ is $2\pi p$%
-periodic and%
\[
Q_{j}(t)=Q_{n}(t+j\tilde{k}\zeta )\text{,}
\]%
where $\tilde{k}=k-(kq-p)\tilde{q}$ with $\tilde{q}$ the $p$-modular inverse
of $q$. The orbit of the choreography is symmetric with respect to rotations
by an angle $2\pi /p$ and the $n$ bodies form groups of $h$-polygons, where $%
h$ is the biggest common divisor of $n$ and $k$.
\end{proposition}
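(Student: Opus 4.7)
The plan is to verify each assertion by direct computation from the defining formula $Q_j(t) = e^{t\bar{J}\sqrt{s_1}/\omega} u_j(t)$, the resonance identity $\sqrt{s_1}/\omega = q/p$, the $2\pi$-periodicity of $u_n$, and the symmetry (\ref{Sy}). Each of the four assertions (period, choreography relation, rotational symmetry, polygon grouping) reduces to a short algebraic manipulation, with the real content being a pair of modular congruences that encode the resonance relation between $p$, $q$, $k$, and $n$.

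First I would establish the $2\pi p$-periodicity of $Q_n$ by the one-line computation
\[
Q_n(t + 2\pi p) = e^{2\pi q \bar{J}}\, e^{t \bar{J} q/p}\, u_n(t + 2\pi p) = Q_n(t),
\]
using $e^{2\pi q \bar{J}} = I$ together with the $2\pi$-periodicity of $u_n$. For the choreography identity $Q_j(t) = Q_n(t + j\tilde{k}\zeta)$, I would substitute (\ref{Sy}) to get
\[
Q_j(t) = e^{t \bar{J} q/p}\, e^{j \bar{J} \zeta}\, u_n(t + jk\zeta),
\]
and compare with $Q_n(t+j\tilde k\zeta)=e^{(t+j\tilde k\zeta)\bar{J}q/p}u_n(t+j\tilde k\zeta)$. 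The identity reduces to the two congruences
\[
(k - \tilde{k})\zeta \in 2\pi\mathbb{Z} \qquad \text{and} \qquad \left(\tilde{k}q/p - 1\right) j \zeta \in 2\pi\mathbb{Z}.
\]
Since $\tilde{k} = k - (kq-p)\tilde{q}$ and $kq - p \in n\mathbb{Z}$, the first is immediate. For the second, combining $q\tilde{q} \equiv 1 \pmod p$ with the resonance hypothesis gives $\tilde{k}q = p(1 - n\ell m)$ for suitable integers $\ell, m$, so $\tilde{k}q/p - 1 \in n\mathbb{Z}$ and the conclusion follows.

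For the $p$-fold rotational symmetry of the orbit, a direct computation gives $Q_n(t + 2\pi) = e^{2\pi q \bar{J}/p} Q_n(t)$, so the image of $Q_n$ in $\mathbb{R}^3$ is invariant under rotation by $2\pi q/p$; since $\gcd(p,q) = 1$, the cyclic group generated by this rotation has order $p$, yielding the claimed $2\pi/p$ symmetry. For the polygon grouping, I would partition $\{0,1,\ldots,n-1\}$ into the $n/h$ cosets $\{j_0 + \ell(n/h) : \ell = 0, \ldots, h-1\}$ of the subgroup $(n/h)\mathbb{Z}/n\mathbb{Z}$. Because $h \mid k$, the identity $(j_0 + \ell n/h)k\zeta = j_0 k\zeta + \ell(k/h)\cdot 2\pi$ together with the $2\pi$-periodicity of $u_n$ and the symmetry (\ref{Sy}) gives
\[
u_{j_0 + \ell n/h}(t) = e^{\ell (2\pi/h)\bar{J}}\, u_{j_0}(t),
\]
displaying each coset as a regular $h$-gon. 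Conjugating by $e^{t \bar{J} q/p}$ preserves this polygonal arrangement in the inertial frame.

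The main obstacle is the modular arithmetic establishing the key congruence $\tilde{k}q \equiv p \pmod{np}$, which requires juggling the inverse relation $q\tilde{q} \equiv 1 \pmod p$ and the resonance hypothesis $kq - p \in n\mathbb{Z}$ at the same time. Once this congruence is secured, everything else is a formal matter of manipulating matrix exponentials of $\bar{J}$ and tracking the $2\pi$-periodicity of $u_n$.
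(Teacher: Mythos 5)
Your proof is correct and amounts to the same arithmetic as the paper's, with a slightly flatter organization. The paper first derives the auxiliary rotation identity $Q_n(t-2\pi)=e^{-2\pi q\bar J/p}Q_n(t)$ (their (\ref{symq})), uses it to obtain the intermediate relation $Q_j(t)=e^{-2\pi j r\bar J/p}Q_n(t+jk\zeta)$ with $r=(kq-p)/n$, and then converts the leftover rotation into a time shift by invoking $q\tilde q\equiv1\pmod p$, which is what introduces $\tilde k=k-rn\tilde q$. You instead write out $Q_j(t)$ and $Q_n(t+j\tilde k\zeta)$ side by side and reduce their equality directly to the two congruences $(k-\tilde k)\zeta\in2\pi\mathbb Z$ and $j\zeta(\tilde kq/p-1)\in2\pi\mathbb Z$, then verify both from $kq-p\in n\mathbb Z$ and $q\tilde q\equiv1\pmod p$; the key algebraic identity $\tilde kq\equiv p\pmod{pn}$ is exactly the content of the paper's step, reached without the intermediate detour. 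Both routes use the same number-theoretic facts, but the paper's version makes the geometric meaning of $\tilde q$ (trading a rotation for a time translation) more visible, while yours is perhaps easier to check mechanically. One further point in your favor: you actually spell out the $h$-polygon argument by partitioning the indices into cosets of $(n/h)\mathbb Z/n\mathbb Z$ and using $h\mid k$, whereas the paper merely asserts this follows from (\ref{Sy}) and the definition of $Q_j$; your computation $u_{j_0+\ell n/h}(t)=e^{\ell(2\pi/h)\bar J}u_{j_0}(t)$ is the missing detail, and conjugation by $e^{t\bar Jq/p}$ indeed carries the regular $h$-gon into the inertial frame.
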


\begin{proof}
Since $u_{n}(t)$ is $2\pi $-periodic and $e^{t\bar{J}\sqrt{s_{1}}/\omega
}=e^{t\bar{J}q/p}$ is $2\pi p$-periodic, then the function $Q_{n}(t)=e^{t%
\bar{J}\sqrt{s_{1}}/\omega }u_{n}(t)$ is $2\pi p$-periodic. Furthermore,
since 
\begin{equation}
Q_{n}(t-2\pi )=e_{n}^{-\bar{J}2\pi q/p}Q_{n}(t),  \label{symq}
\end{equation}%
the orbit of $Q_{n}(t)$ is invariant under rotations of $2\pi /p$. The fact
that the $n$ bodies form $h$-polygons follows from symmetry (\ref{Sy}) and
the definition of $Q_{j}(t)$.

By assumption 
\[
r=(kq-p)/n\in \mathbb{Z},
\]%
then symmetry (\ref{Sy}) implies that the solution in inertial frame satisfies
\begin{equation}
Q_{j}(t)=e^{-\bar{J}2\pi j(r/p)}Q_{n}(t+jk\zeta )\text{.}  \label{qqq}
\end{equation}%
In the case (a) that $kq-p=0$, the symmetry (\ref{symq}) gives
straightforward that 
$Q_{j}(t)=Q_{n}(t+kj\zeta )$. 
In the case (b) that $p$ and $q$ are relatively prime, we can find $\tilde{q}
$ such that $q\tilde{q}=1$ mod $p$. It follows from the symmetry (\ref{symq}%
) that 
\[
Q_{n}(t-2\pi jr\tilde{q})=e^{-\bar{J}2\pi j(r/p)}Q_{n}(t).
\]%
Therefore,
\[
Q_{j}(t)=e^{-\bar{J}2\pi j(r/p)}Q_{n}(t+jk\zeta )=Q_{n}(t+j(k-rn\tilde{q}%
)\zeta ). \qedhere
\]
\end{proof}

\begin{corollary}[$(p,q)$-torus knots] \label{cor:torusKnots}
In the case that $u_{n}(t)$ is a  $p:q$ resonant orbit in the axial family
that does not cross the $z$-axis, then $Q_{n}(t)$ winds (after the period $%
2\pi p$) around a toroidal manifold with winding numbers $p$ and $q$, 
\textit{i.e.}, the choreography path is a $(p,q)$-torus knot. In the case
that $u_{n}(t)$ is a $p:q$ resonant orbit in the vertical Lyapunov family
that does not cross the $z$-axis, then the choreography $Q_{n}(t)$ winds $p$
times in a cylindrical surface. 
\end{corollary}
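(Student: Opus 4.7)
The plan is to construct the ambient unknotted $2$-torus directly from the rotating-frame loop and then read off winding numbers from the explicit conjugation $Q_n(t)=e^{t\bar{J}q/p}u_n(t)$ supplied by Proposition~\ref{proposition}. Writing $u_n(t)=(w_n(t),z_n(t))\in\mathbb{C}\times\mathbb{R}$, the non-crossing hypothesis gives $|w_n(t)|>0$ for every $t$, so I can pick a continuous argument $\phi(t)$ with $w_n(t)=|w_n(t)|\,e^{i\phi(t)}$. Because the axial family bifurcates from the vertical Lyapunov family, along which $w_n$ oscillates around a nonzero equilibrium and hence has winding number $0$ about the origin, I expect $\phi$ to be itself $2\pi$-periodic (not merely quasi-periodic).

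Next I would build the torus and locate $Q_n$ on it. Let $\Pi(t)\bydef(|w_n(t)|,z_n(t))$ denote the projection into the half-plane $H\bydef(0,\infty)\times\mathbb{R}$, which in the axial family I expect to be a simple closed curve bounding a disk $D\subset H$. Revolving $\Pi$ about the $z$-axis produces
\[
\mathcal{T}\bydef\{\,e^{\theta\bar{J}}(r,0,z)\,:\,(r,z)\in\Pi(\mathbb{R}/2\pi\mathbb{Z}),\ \theta\in\mathbb{R}/2\pi\mathbb{Z}\,\}\subset\mathbb{R}^{3},
\]
a smoothly embedded $2$-torus which is unknotted because it is a surface of revolution bounding the solid torus obtained by revolving $D$. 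Using $u_n(t)=e^{\phi(t)\bar{J}}\Pi(t)$ I can rewrite $Q_n(t)=e^{(\phi(t)+qt/p)\bar{J}}\Pi(t)$, so $Q_n$ lies on $\mathcal{T}$; parametrizing $\mathcal{T}$ by $(s,\theta)\in(\mathbb{R}/2\pi\mathbb{Z})^{2}$ via $(s,\theta)\mapsto e^{\theta\bar{J}}\Pi(s)$, the curve corresponds to $(t\bmod 2\pi,\ (\phi(t)+qt/p)\bmod 2\pi)$. Over the period $2\pi p$ given by Proposition~\ref{proposition} the first coordinate winds $p$ times, while the second winds $q$ times since $\phi$ is $2\pi$-periodic and $qt/p$ accumulates $2\pi q$ over the period. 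The coprimality of $p$ and $q$ from the $p{:}q$ resonance condition then guarantees that the induced map is an embedding, so $Q_n$ is a $(p,q)$-torus knot in $\mathbb{R}^{3}$.

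The vertical Lyapunov case proceeds identically with $\mathcal{T}$ replaced by a cylindrical surface: there $u_n$ is essentially a pure vertical oscillation whose projection $\Pi$ traces an arc rather than a loop in $H$, so its $SO(2)$-saturation is an annulus around the $z$-axis on which the same winding computation yields a single winding number $p$ around the core circle. The main obstacle I foresee is the embedding step for the axial case, namely rigorously justifying that $\Pi$ is a simple closed curve in $H$ and that the parametrization $(s,\theta)\mapsto e^{\theta\bar{J}}\Pi(s)$ is injective. Near the bifurcation from the vertical Lyapunov family both statements should follow from a standard perturbative / implicit function argument, but controlling them globally along the axial branch would either require an additional monotonicity input for the family or an a-posteriori check carried out in tandem with the Newton--Kantorovich scheme developed in Sections~\ref{sec:a-posteriori-validation}--\ref{sec:results}.
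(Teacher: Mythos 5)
The paper itself offers no explicit proof of Corollary~\ref{cor:torusKnots} (it is stated as an immediate consequence of Proposition~\ref{proposition}), and your construction--project $u_n$ onto the half-plane, revolve about the $z$-axis to obtain an unknotted embedded torus, read off the two winding numbers from the explicit conjugation $Q_n(t)=e^{t\bar{J}q/p}u_n(t)$--is precisely the intended argument, with the winding count $(p,q)$ correct over the period $2\pi p$.

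There is, however, a real gap in your justification of the hypothesis that $\phi$ is $2\pi$-periodic, i.e.\ that $u_n$ has zero winding about the $z$-axis. You present this as something to be \emph{expected} because the axial family bifurcates from the vertical Lyapunov family, along which the winding is zero. This rationale only holds in a neighbourhood of the bifurcation: further along the axial branch the winding number can and does change. The paper itself points this out in Remark~\ref{rem:versus}: the $n=5$ trefoil of Figure~\ref{fig:fiveBod} is a $p:q=3:1$ resonant orbit in the axial family whose reduced orbit $\tilde u$ has winding number \emph{one} about the $z$-axis, and consequently the resulting choreography is a $(3,2)$-torus knot rather than a $(3,1)$ one. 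So zero winding is a genuine \emph{additional hypothesis} on the orbit --- one that, like the requirement that $\Pi$ be a simple closed loop, must be stated explicitly (as Remark~\ref{rem:versus} does) and then checked a posteriori for any particular orbit, not derived from membership in the axial family. In your framework, if $w_n$ has winding $\ell$ about the origin, the toroidal coordinate $\phi(t)+qt/p$ accumulates $2\pi(p\ell+q)$ over $[0,2\pi p]$, so the second winding number becomes $p\ell+q$; your argument should either impose $\ell=0$ up front or state the conclusion as $(p,\,p\ell+q)$.

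Two smaller remarks. Your flagging of the embedding step for $\Pi$ and the injectivity of the revolution parametrization is the right instinct; these too are not automatic along the branch and would need an a-posteriori verification alongside the Newton--Kantorovich step. And in the cylindrical (vertical Lyapunov) case, ``a single winding number $p$ around the core circle'' is imprecise: the topological winding of $Q_n$ around the $z$-axis is $q$, while $p$ counts the number of traversals of the vertical arc; the Corollary's own phrase ``winds $p$ times in a cylindrical surface'' refers to the latter, so your count matches the paper's intent, but the justification should say so rather than invoke a winding number around a core circle that the cylinder does not possess.
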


We conclude that the solution $q_{j}(t)=Q_{j}(t\omega)$ is a $2\pi q/\sqrt{%
s_{1}}$-periodic choreography satisfying the properties discussed above for $%
Q_{j}(t)$. Therefore, by validating solutions of $\mathcal{G}%
(u_{n},\omega)=0 $ in the axial family we prove rigorously the existence of
choreography paths that are $(p,q)$-torus knots.

\subsection{Symmetries, integrals of movement and Poincar\'{e} conditions}

Here after we omit the index $n$ that represents the $n$th body in the map
$\mathcal{G}(u)$ and denote the components of $u$ by
\[
u=(u_{1},u_{2},u_{3}).
\]
The map $\mathcal{G}(u)$ that gives the existence of choreographies is the
gradient of the action $\mathcal{A}(u):X\rightarrow\mathbb{R}$ of the $n$-body
problem reduced to paths with symmetries (\ref{Sy}). The action $\mathcal{A}%
(u)$ is invariant under the action of the group $(\theta,\varphi,\tau)\in
G\bydef T^{2}\times\mathbb{R}$ in $u\in X$ given by
\[
(\theta,\varphi,\tau)u(t)=e^{\bar{J}\theta}u(t+\varphi)+(0,0,\tau)\text{,}%
\]
which corresponds to $z$-translations and $(x,y)$-rotations of bodies, and time shift.

Given that the gradient $\mathcal{G}=\nabla\mathcal{A}$ is $G$-equivariant,
$\mathcal{G}((\theta,\varphi,\tau)u)=(\theta,\varphi,\tau)\mathcal{G}(u)$, if
$u_{0}$ is a critical point of $\mathcal{A}$, then $(\theta,\varphi,\tau)u_{0}$
is a critical point for all $(\theta,\varphi,\tau)\in G$, because
\begin{equation}
\mathcal{G}((\theta,\varphi,\tau)u_{0})=(\theta,\varphi,\tau)\mathcal{G}%
(u_{0})=0. \label{ze}%
\end{equation}
Therefore, if $u_{0}$ is not fixed by the elements of $G$, then its orbit
under the action of the group forms a $3$-dimensional manifold of zeros of
$\mathcal{G}$. Taking derivatives respect the parameters $\theta,$ $\varphi$
and $\tau$ of equation (\ref{ze}) and evaluating the parameter at $0$,\ we
obtain by the chain rule that $D\mathcal{G}(u_{0})A_{j}(u_{0})=0$, where
$A_{j}$ are the generator fields of the group $G$,
\begin{align*}
A_{1}(u)  &  =\partial_{\theta}|_{\theta=0}(\theta,0,0)u=\bar{J}u\text{,}\\
A_{2}(u)  &  =\partial_{\varphi}|_{\varphi=0}(0,\varphi,0)u=\dot{u}\text{,}\\
A_{3}(u)  &  =\partial_{\tau}|_{\tau=0}(0,0,\tau)u=(0,0,1)\text{.}%
\end{align*}
Therefore $D\mathcal{G}(u_{0})$ has the zero eigenvalues $A_{j}(u_{0})$ for
$j=1,2,3$ corresponding to tangent vectors to the $3$-dimensional manifold
generated by the action of $G$. This property holds for any equivariant field
even if it is not gradient.

In addition, for gradient maps $\mathcal{G}=\nabla\mathcal{A}$, we have also
conserved quantities generated by the action of the group $G$ (Noether
theorem). That is, since the action is invariant, $\mathcal{A}((\theta
,\varphi,\tau)u)=\mathcal{A}(u)$, deriving respect $\theta,$ $\varphi$ and
$\tau$ and evaluating the parameters at $0$, we have by chain rule that%
\begin{equation}
0=\partial_{j}\mathcal{A}(u)=\partial_{j}\mathcal{A}((\theta,\varphi
,\tau)u)=\left\langle \nabla\mathcal{A}(u),A_{j}(u)\right\rangle =\left\langle
\mathcal{G}(u),A_{j}(u)\right\rangle \text{,} \label{res}%
\end{equation}
i.e. the field $\mathcal{G}$ is orthogonal to the infinitesimal generators
$A_{j}(u)$ for $j=1,2,3$.

In summary, we have that the map $\mathcal{G}$ has $3$-dimensional families of
zeros and also $3$-restrictions given by (\ref{res}). To prove the existence
of solutions, we could take $3$-restrictions in the domain and range of
$\mathcal{G}$. But given that the range is a non-flat manifold, it is simpler
to augment the delay differential equation $\mathcal{G}=0$ with the three
Lagrangian multipliers $\lambda_{j}$ for $j=1,2,3$,
\begin{equation}
\mathcal{G}(u,\omega)+\sum_{j=1}^{3}\lambda_{j}A_{j}(u)=0.\label{DDE2}%
\end{equation}
An important observation is that the solutions of equation \eqref{DDE2} are
equivalent to the solutions of the original equations of motion.

\begin{proposition}
\label{gen}If $A_{j}(u)$ are linearly independent for $j=1,2,3$, then a
solution $u$ to $\mathcal{G}(u,\omega)=0$ is a solution to the equation
\eqref{DDE2} if and only if $\lambda_{j}=0$ for $j=1,2,3$.
\end{proposition}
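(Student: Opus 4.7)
The plan is to prove the two implications separately, both of which follow essentially by direct substitution together with the hypothesis that $A_1(u),A_2(u),A_3(u)$ are linearly independent (as elements of the appropriate function space).

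For the ``if'' direction, I would assume $\lambda_1=\lambda_2=\lambda_3=0$. Then equation \eqref{DDE2} reduces literally to $\mathcal{G}(u,\omega)=0$, which holds by hypothesis. So $u$ automatically solves \eqref{DDE2}.

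For the ``only if'' direction, I would assume that $u$ is a solution of $\mathcal{G}(u,\omega)=0$ and also satisfies \eqref{DDE2} for some triple $(\lambda_1,\lambda_2,\lambda_3)$. Subtracting the first equation from the second immediately yields
\begin{equation*}
\sum_{j=1}^{3}\lambda_{j}A_{j}(u)=0.
\end{equation*}
Since the three generator fields $A_j(u)$ are linearly independent by assumption, this forces $\lambda_1=\lambda_2=\lambda_3=0$, as desired.

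The proof is essentially a one-line linear-algebra argument once the setup has been put in place; there is no real obstacle. The only point that merits attention is to make sure the linear independence is invoked at the level of the ambient function space in which $u$ and the generator fields live (i.e., $A_1(u)=\bar{J}u$, $A_2(u)=\dot{u}$, and $A_3(u)=(0,0,1)$ are independent as elements of $X$), rather than pointwise in $t$. This is precisely the hypothesis of the proposition, so nothing further needs to be verified.
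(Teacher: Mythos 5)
Your argument is logically valid for the proposition as literally worded, but it is not the paper's argument, and it does not deliver the version the paper actually uses. You assume $\mathcal{G}(u,\omega)=0$ at the outset, subtract it from \eqref{DDE2}, and then apply linear independence of the $A_j(u)$. The paper's proof instead pairs \eqref{DDE2} with each generator $A_i(u)$ and invokes the Noether-type orthogonality relation \eqref{res}, namely $\langle\mathcal{G}(u),A_i(u)\rangle=0$ for every $u$, not just critical points. This kills the $\mathcal{G}$ term and yields the Gram system
\[
\sum_{j=1}^{3}\lambda_{j}\left\langle A_{j}(u),A_{i}(u)\right\rangle =0, \qquad i=1,2,3,
\]
after which linear independence makes the Gram matrix invertible and forces $\lambda_j=0$. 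Crucially, this derivation never assumes $\mathcal{G}(u,\omega)=0$; it assumes only that $(u,\lambda)$ solves \eqref{DDE2}.

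That stronger conclusion --- any solution of \eqref{DDE2} automatically has $\lambda_j=0$, and hence $u$ is a genuine solution of $\mathcal{G}(u,\omega)=0$ --- is exactly what is invoked downstream in the proof of Proposition~\ref{prop:alpha=0}, where one only knows a priori that $(u,v,w)$ solves the augmented polynomial system and must deduce that $u$ solves the original equation of motion. Your subtraction argument cannot reach that conclusion because it presupposes the very identity one is trying to establish. So the ``one-line linear-algebra'' step you describe is fine once $\sum_j\lambda_jA_j(u)=0$ is in hand, but the substantive content of the proposition --- and the reason it is not a tautology --- is obtaining that identity via the orthogonality \eqref{res} without first knowing $\mathcal{G}(u)=0$. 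Restructure the argument so that the hypothesis is that $(u,\lambda)$ solves \eqref{DDE2} alone, and bring in \eqref{res} to eliminate the $\mathcal{G}$ term.
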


\begin{proof}
Taking the product of (\ref{DDE2}) with respect to a generator $A_{j}(u)$
and using the orthogonality we obtain%
\[
\sum_{j=1}^{3}\lambda_{j}\left\langle A_{j}(u),A_{i}(u)\right\rangle =0.
\]
The result follows from the linear independence of $A_{j}(u)$, see
\cite{CaDoGa18} for details.
\end{proof}

Also the restriction in the domain forms a non-flat manifold, and it is
simpler to augment the equation (\ref{DDE2}) with three equations that
represent the respective Poincar\'e sections $I_{j}(u)=0$. Each geometric
condition $I_{j}(u)=0$ with
\[
I_{j}(u)=\left\langle u-\tilde{u},A_{j}(\tilde{u})\right\rangle :X\rightarrow
\mathbb{R}^{3}\text{,}%
\]
implies that $u$ is in the orthogonal plane to the orbit of $\tilde{u}$ under
the action of $G$, where $\tilde{u}$ is a reference solution, which typically
is the solution in the previous step of the continuation.

Taking as reference $\tilde{u}=(1,0,0)$ for the generators $A_{3}(\tilde
{u})=(0,0,1)$, then
\begin{equation}
I_{3}(u)=\int_{0}^{2\pi}u(t)\cdot(0,0,1)~dt=\int_{0}^{2\pi}u_{3}%
(t)~dt.\label{eq:phase_condition_I3}%
\end{equation}
Given a reference solution $\tilde{u}$, the other geometric conditions are
given explicitly by
\begin{equation}
I_{1}(u)=\int_{0}^{2\pi}\left(  u-\tilde{u}\right)  \cdot\bar{J}\tilde
{u}~dt=\int_{0}^{2\pi}u\cdot\bar{J}\tilde{u}~dt\label{eq:phase_condition_I1}%
\end{equation}
and
\begin{equation}
I_{2}(u)=\int_{0}^{2\pi}\left(  u-\tilde{u}\right)  \cdot\tilde{u}^{\prime
}(t)=\int_{0}^{2\pi}u(t)\cdot\tilde{u}^{\prime}(t)~dt\text{.}%
\label{eq:phase_condition_I2}%
\end{equation}

The generators $A_{j}(u)$ are linearly independent in the solutions that we
are looking. In other cases the solutions are relative equilibria, which
represents a simpler problem than the map $\mathcal{G}$.

\subsection{Automatic differentiation: obtaining a polynomial problem} \label{eq:automatic_differentiation}

Setting $\dot{u}=v$, equation $\mathcal{G}(u,\omega)=0$ becomes
\[
\omega^{2}\dot{v}+2\omega\sqrt{s_{1}}\bar{J}v-s_{1}\bar{I}u+\sum_{j=1}%
^{n-1}\frac{u-e^{\ j\bar{J}\zeta}u(t+jk\zeta)}{\left\Vert u-e^{\ j\bar{J}%
\zeta}u(t+jk\zeta)\right\Vert ^{3}}=0.
\]
In this section, we turn the non-polynomial DDE \eqref{DDE2} into a higher
dimensional DDE with polynomial nonlinearities, using the automatic
differentiation technique as in \cite{Mi18,jp_jaime_j,jp_chris_jb_DDE}. For
this, we define for $j=1,...,n-1$ the variables
\[
w_{j}(t)=\frac{1}{\left\Vert u(t)-e^{\ j\bar{J}\zeta}u(t+jk\zeta)\right\Vert
}.
\]
Then $w_{j}$ satisfy
\begin{align*}
\dot{w}_{j}  &  =\frac{d}{dt}\left(  \left\Vert u(t)-e^{\ j\bar{J}\zeta
}u(t+jk\zeta)\right\Vert ^{2}\right)  ^{-1/2}\\
&  =-w_{j}^{3}\left\langle v(t)-e^{\ j\bar{J}\zeta}v(t+jk\zeta
),u(t)-e^{\ j\bar{J}\zeta}u(t+jk\zeta)\right\rangle .
\end{align*}

Therefore, the augmented system of equations \eqref{DDE2} is
\begin{align}
\label{eq:dde1_automatic_differentiation}
\dot{u}  &  =v 
\\
\label{eq:dde2_automatic_differentiation}
\dot{v}  &  =\frac{1}{\omega^{2}}\left(  -2\omega\sqrt{s_{1}}\bar{J}%
v+s_{1}\bar{I}u-\sum_{j=1}^{n-1}w_{j}^{3}\left(  u(t)-e^{\ j\bar{J}\zeta
}u(t+jk\zeta)\right)  \right)  +\lambda_{1}\bar{J}u+\lambda_{2}v+\lambda
_{3}e_{3}
\\
\label{eq:dde3_automatic_differentiation}
\dot{w}_{j}  &  =-w_{j}^{3}\left\langle v(t)-e^{\ j\bar{J}\zeta}%
v(t+jk\zeta),u(t)-e^{\ j\bar{J}\zeta}u(t+jk\zeta)\right\rangle +\alpha
_{j}w_{j}^{3},
\end{align}
for $j=1,\dots,n-1$, where $e_{3}=(0,0,1)$. We supplement these equations with
the conditions
\begin{equation}
w_{j}(0)=\frac{1}{\left\Vert u(0)-e^{\ j\bar{J}\zeta}u(jk\zeta)\right\Vert
},\quad j=1,\dots,n-1,\label{eq:extra_initial_conditions}%
\end{equation}
which are balanced by the unfolding parameters $\alpha_{1},\dots,\alpha_{n-1}$
(e.g. see \cite{jp_jaime_j}), similarly to the manner in which the phase
conditions $I_{1}(u)=I_{2}(u)=I_{3}(u)=0$ (given respectively by
\eqref{eq:phase_condition_I1}, \eqref{eq:phase_condition_I2} and
\eqref{eq:phase_condition_I3}) are balanced by the unfolding parameters
$\lambda_{1}$, $\lambda_{2}$ and $\lambda_{3}$. Indeed, we can prove that a
solution of this system is necessarily a solution of the $n$-body problem
similarly to Proposition \ref{gen}.

\begin{proposition} \label{prop:alpha=0}
A $2\pi$-periodic solution $(u,v,w)$ of the system \eqref{eq:dde1_automatic_differentiation}-\eqref{eq:dde3_automatic_differentiation} with the conditions
(\ref{eq:extra_initial_conditions}) satisfies that $\alpha_{j}=0$ for
$j=1,...,n$, i.e. $u$ is a $2\pi$-periodic solution of $\mathcal{G}(u,\omega)=0$.
\end{proposition}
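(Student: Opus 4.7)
The plan is to mimic the structure of Proposition~\ref{gen}, but with an additional preliminary step that eliminates the unfolding parameters $\alpha_j$ before the Lagrange multipliers $\lambda_j$ can be handled. The key observation is that the equation \eqref{eq:dde3_automatic_differentiation} for each $w_j$ was designed so that, modulo $\alpha_j$, its unique solution matching the initial condition \eqref{eq:extra_initial_conditions} is exactly $w_j(t)=\|d_j(t)\|^{-1}$, where
\[
d_j(t)\bydef u(t)-e^{\,j\bar J\zeta}u(t+jk\zeta), \qquad r_j(t)\bydef\|d_j(t)\|.
\]
So the strategy is: first use $2\pi$-periodicity of $w_j$ to force $\alpha_j=0$, then use the initial condition to identify $w_j$ with the geometric reciprocal distance, and finally invoke Proposition~\ref{gen} to kill the $\lambda_j$'s.

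For the first step, introduce the change of variable $y_j\bydef w_j^{-2}$. Using $\dot u=v$ from \eqref{eq:dde1_automatic_differentiation}, one has $\dot d_j(t)=v(t)-e^{\,j\bar J\zeta}v(t+jk\zeta)$, and a direct differentiation of \eqref{eq:dde3_automatic_differentiation} yields
\[
\dot y_j = -\frac{2\dot w_j}{w_j^{3}}
      = 2\bigl\langle \dot d_j,d_j\bigr\rangle - 2\alpha_j
      = \frac{d}{dt}\bigl(r_j^{2}\bigr) - 2\alpha_j .
\]
Integrating over one period and using that $u$, hence $r_j^{2}$, is $2\pi$-periodic together with the $2\pi$-periodicity of $y_j$ forces $4\pi\alpha_j=0$, that is $\alpha_j=0$. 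With $\alpha_j=0$ in hand, the identity above integrates to $y_j(t)-y_j(0)=r_j(t)^{2}-r_j(0)^{2}$, and the initial condition \eqref{eq:extra_initial_conditions} gives $y_j(0)=r_j(0)^{2}$, so $y_j(t)=r_j(t)^{2}$ and consequently $w_j(t)=1/\|d_j(t)\|$ for all $t$.

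Substituting this identity into \eqref{eq:dde2_automatic_differentiation} collapses the $w_j^{3}(u-e^{j\bar J\zeta}u(\cdot+jk\zeta))$ term back into the Newtonian gradient appearing in $\mathcal{G}(u,\omega)$, so $(u,\omega)$ satisfies the augmented equation \eqref{DDE2} with multipliers $\lambda_1,\lambda_2,\lambda_3$. Since the solutions under consideration are not relative equilibria, the generators $A_1,A_2,A_3$ are linearly independent, so Proposition~\ref{gen} applies and yields $\lambda_j=0$ for $j=1,2,3$. This completes the reduction to $\mathcal{G}(u,\omega)=0$. The only step requiring real care is the algebraic manipulation that puts the $w_j$ equation into the telescoping form $\dot y_j=\frac{d}{dt}(r_j^2)-2\alpha_j$; once that is in place, periodicity and the initial condition do the rest essentially for free, and the $\lambda_j=0$ argument is handled by citing Proposition~\ref{gen}.
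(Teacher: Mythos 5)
Your proof is correct and follows essentially the same route as the paper's: divide the $w_j$ equation by $w_j^3$ to put it in telescoping form, integrate over a period using $2\pi$-periodicity to force $\alpha_j=0$, then use the initial condition to identify $w_j$ with the reciprocal distance, and finally invoke Proposition~\ref{gen} for the $\lambda_j$'s. Your change of variable $y_j=w_j^{-2}$ is a cosmetic repackaging of the paper's $\frac{d}{dt}(w_j^{-2})$ manipulation, and your explicit appeal to linear independence of the generators $A_1,A_2,A_3$ (needed for Proposition~\ref{gen}) makes a hypothesis visible that the paper leaves implicit.
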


\begin{proof}
Dividing the equation for $w_{j}$ by $w_{j}^{3}$ and using that $v=\dot{u}$,
we obtain that
\[
\frac{d}{dt}\left(  -2w_{j}^{-2}\right)  =\frac{d}{dt}\left(  -\frac{1}%
{2}\left\Vert u(t)-e^{\ j\bar{J}\zeta}u(t+jk\zeta)\right\Vert ^{2}\right)
+\alpha_{j}\text{.}%
\]
Since $(u,v,w)$ is $2\pi$-periodic, integrating over the period $2\pi$, we
obtain that $2\pi\alpha_{j}=0$, see \cite{jp_jaime_j} for details. Given that
$\alpha_{j}=0$, the initial condition (\ref{eq:extra_initial_conditions})
implies that $w_{j}(t)=\left\Vert u(t)-e^{\ j\bar{J}\zeta}u(t+jk\zeta
)\right\Vert ^{-1}$. Therefore, $u$ is a solution to the augmented system
\eqref{DDE2} and, by Proposition \ref{gen}, to the equation $\mathcal{G}%
(u,\omega)=0$.
\end{proof}

In the next section, equations \eqref{eq:dde1_automatic_differentiation},
\eqref{eq:dde2_automatic_differentiation},
\eqref{eq:dde3_automatic_differentiation} and
\eqref{eq:extra_initial_conditions} are combined with Fourier expansions to
set up the \emph{Fourier map} whose zeros corresponds to choreographies having
the prescribed symmetry \eqref{Sy} and the topological property of a torus knot.

\subsection{Fourier map for automatic differentiation} \label{sec:Fourier_map}

The goal of this section is to look for periodic solutions of the delay differential equations \eqref{eq:dde1_automatic_differentiation}, \eqref{eq:dde2_automatic_differentiation} and 
\eqref{eq:dde3_automatic_differentiation} satisfying the extra conditions \eqref{eq:extra_initial_conditions} using the Fourier series expansions 
\begin{align}
\nonumber
u(t)  &  =
\begin{pmatrix}
u_{1}(t)\\
u_{2}(t)\\
u_{3}(t)
\end{pmatrix}
= \sum_{\ell\in\mathbb{Z}}e^{i \ell t}u_{\ell}, \quad u_{\ell} =
\begin{pmatrix}
(u_{1})_{\ell}\\
(u_{2})_{\ell}\\
(u_{3})_{\ell}%
\end{pmatrix}
\\
v(t)  &  =%
\begin{pmatrix}
v_{1}(t)\\
v_{2}(t)\\
v_{3}(t)
\end{pmatrix}
=\sum_{\ell\in\mathbb{Z}}e^{i \ell t}v_{\ell}, \quad v_{\ell} =
\begin{pmatrix}
(v_{1})_{\ell}\\
(v_{2})_{\ell}\\
(v_{3})_{\ell}%
\end{pmatrix}
\label{eq:Fourier_expansions}
\\
w(t)  &  =%
\begin{pmatrix}
w_{1}(t)\\
\vdots\\
w_{n-1}(t)
\end{pmatrix}
=\sum_{\ell\in\mathbb{Z}}e^{i \ell t}w_{\ell}, \quad w_{\ell} =
\begin{pmatrix}
(w_{1})_{\ell}\\
\vdots\\
(w_{n-1})_{\ell}
\end{pmatrix}.
\nonumber
\end{align}

Based on the fact that periodic solutions of analytic DDEs are analytic \cite{MR0322315}, we consider the following Banach space of geometrically decaying Fourier coefficients
\begin{equation} \label{eq:ell_nu_one}
\ell_\nu^1 \bydef \left\{ c = (c_\ell)_{\ell \in \Z} : \| c\|_\nu \bydef \sum_{\ell \in \Z} |c_\ell | \nu^{|\ell|} < \infty \right\},
\end{equation}
where $\nu \ge 1$. If $\nu>1$ and $a=(a_\ell)_{\ell \in \Z} \in \ell_\nu^1$, then the function $t \mapsto \sum_{\ell \in\mathbb{Z}} e^{i \ell t}a_{\ell}$ defines a $2\pi$-periodic analytic function on the complex strip of width $\ln(\nu)>0$. Another useful property of the space $\ell_\nu^1$ is that it is a Banach algebra under discrete convolution $*:\ell_\nu^1 \times \ell_\nu^1 \to \ell_\nu^1$ defined as
\[
(a*b)_k = \sum_{k_1+k_2=k} a_{k_1} b_{k_2},
\]
where $a,b \in \ell_\nu^1$. More explicitly, $\| a * b \|_\nu \le \| a \|_\nu \| b \|_\nu$, for all $a,b \in \ell_\nu^1$ and $\nu \ge 1$.

The unknowns of the DDEs \eqref{eq:dde1_automatic_differentiation}, \eqref{eq:dde2_automatic_differentiation} and 
\eqref{eq:dde3_automatic_differentiation} are given by the unfolding parameters $\lambda \bydef (\lambda_j)_{j=1}^3 \in \C^3$ and $\alpha \bydef (\alpha_j)_{j=1}^{n-1} \in \C^{n-1}$, and the Fourier coefficients $u = (u_j)_{j=1}^3 \in (\ell_\nu^1)^3$, $v = (v_j)_{j=1}^3 \in (\ell_\nu^1)^3$ and $w = (w_j)_{j=1}^{n-1} \in (\ell_\nu^1)^{n-1}$.
The total vector of unknown $x$ and the Banach space $X$ are then given by
\begin{equation} \label{eq:Banach_space_X}
x \bydef
\begin{pmatrix}
\lambda\\
\alpha\\
u\\
v\\
w
\end{pmatrix}
\in X \bydef
\mathbb{C}^{3} \times\mathbb{C}^{n-1} \times(\ell_{\nu}^{1})^{3} \times
(\ell_{\nu}^{1})^{3} \times(\ell_{\nu}^{1})^{n-1} \cong \mathbb{C}^{n+2}
\times(\ell_{\nu}^{1})^{n+5}.
\end{equation}
The Banach space $X$ is endowed with the norm
\begin{equation} \label{eq:Banach_space_X}
\|x\|_{X} \bydef 
\max\left\{
|\lambda|_{\infty}, |\alpha|_{\infty},\max_{j=1,2,3}\|u_{j}\|_{\nu},
\max_{j=1,2,3}\|v_{j}\|_{\nu},\max_{j=1,\dots,n-1}\|w_{j}\|_{\nu} \right\} ,
\end{equation}
where
\[
|\lambda|_{\infty}= \max_{j=1,2,3} |\lambda_{j}| \quad\text{and} \quad
|\alpha|_{\infty}= \max_{j=1,\dots,n-1} |\alpha_{j}|.
\]

In order to define the Fourier map problem $F(x)=0$, we plug the Fourier expansions \eqref{eq:Fourier_expansions} in \eqref{eq:dde1_automatic_differentiation}, \eqref{eq:dde2_automatic_differentiation}, \eqref{eq:dde3_automatic_differentiation} and \eqref{eq:extra_initial_conditions}, and solve for the corresponding nonlinear map. First note that
\begin{align*}
u(t)-e^{\ j\bar{J}\zeta}u(t+jk\zeta)  &  =\sum_{\ell\in\mathbb{Z}}\left(
u_{\ell} -e^{\ j\bar{J}\zeta}e^{ijk \ell\zeta}u_{\ell}\right)  e^{i \ell
t}=\sum_{\ell\in\mathbb{Z} }M_{j \ell}u_{\ell}e^{i\ell t}\text{,}%
\end{align*}
where $M_{j\ell}$ is defined as
\[
M_{j\ell}=I-e^{\ j\bar{J}\zeta}e^{ijk\ell\zeta} =
\begin{pmatrix}
1- e^{ijk\ell\zeta} \cos(j \zeta) & e^{ijk\ell\zeta} \sin(j \zeta) & 0\\
- e^{ijk\ell\zeta} \sin(j \zeta) & 1- e^{ijk\ell\zeta} \cos(j \zeta) & 0\\
0 & 0 & 1 - e^{ijk\ell\zeta}%
\end{pmatrix},
\]
since $\bar{J}=diag(J,0)$ with $J=%
\begin{pmatrix}
0 & -1\\
1 & 0
\end{pmatrix}
$.

In Fourier space, the phase conditions $I_{1}(u)=I_{2}(u)=I_{3}(u)=0$ (see
\eqref{eq:phase_condition_I1}, \eqref{eq:phase_condition_I2} and
\eqref{eq:phase_condition_I3}, respectively) are given by%

\begin{align*}
I_{1}(u)  &  = \int_{0}^{2\pi} -u_{1}(t) \tilde u_{2}(t) + u_{2}(t) \tilde u_{1}(t)
~{dt}\\
&  =-(u_{1} * \tilde u_{2})_{0} + (u_{2} * \tilde u_{1})_{0}\\
&  = \sum_{\ell\in\mathbb{Z}} - (u_{1})_{\ell} (\tilde u_{2})_{-\ell} +
(u_{2})_{\ell} (\tilde u_{1})_{-\ell}\\
I_{2}(u) & =\int_{0}^{2\pi} \left(  u_{1}(t) \tilde u_{1}^{\prime}(t) + u_{2}(t)
\tilde u_{2}^{\prime}(t) + u_{3}(t) \tilde u_{3}^{\prime}(t) \right) ~dt\\
&  = (u_{1}*\tilde u_{1}^{\prime})_{0} + (u_{2}*\tilde u_{2}^{\prime})_{0} +
(u_{3}*\tilde u_{3}^{\prime})_{0}\\
&  = \sum_{\ell\in\mathbb{Z}} i \ell\left(  (u_{1})_{\ell} (\tilde
u_{1})_{-\ell} + (u_{2})_{\ell} (\tilde u_{2})_{-\ell} + (u_{3})_{\ell}
(\tilde u_{3})_{-\ell} \right) \\
I_{3}(u) &  = \int_{0}^{2\pi}u_{3}(t)~dt = (u_{3})_{0},
\end{align*}
where $\tilde u_{1}$, $\tilde u_{2}$ and $\tilde u_{3}$ have only finitely many non zero terms.

Hence, setting $\eta: (\ell_{\nu}^{1})^{3} \to\mathbb{C}^{3}$  as
\begin{equation}
\label{eq:eta}
\eta(u) =
\begin{pmatrix}
\eta_{1}(u)\\
\eta_{2}(u)\\
\eta_{3}(u)
\end{pmatrix}
\bydef
\begin{pmatrix}
-(u_{1} * \tilde u_{2})_{0} + (u_{2} * \tilde u_{1})_{0}\\
(u_{1}*\tilde u_{1}^{\prime})_{0} + (u_{2}*\tilde u_{2}^{\prime})_{0} +
(u_{3}*\tilde u_{3}^{\prime})_{0}\\
(u_{3})_{0}%
\end{pmatrix}
,
\end{equation}
we get that $\eta(u) = 0$ implies that $I_{1}(u)=I_{2}(u)=I_{3}(u)=0$.
Given $j=1,\dots,n-1$ and $u \in(\ell_{\nu}^{1})^{3}$, denote $M_{j} u
\in(\ell_{\nu}^{1})^{3}$ component-wise by
\[
(M_{j} u )_{\ell}
\bydef
M_{j \ell}
u_{\ell}=
\begin{pmatrix}
\left(  M_{j \ell} u_{\ell}\right) _{1}\\
\left(  M_{j \ell} u_{\ell}\right) _{2}\\
\left(  M_{j \ell} u_{\ell}\right) _{3}
\end{pmatrix} 
=
\begin{pmatrix}
\left(  1- e^{ijk\ell\zeta} \cos(j \zeta) \right)  (u_{1})_{\ell}+
e^{ijk\ell\zeta} \sin(j \zeta) (u_{2})_{\ell}\\
- e^{ijk\ell\zeta} \sin(j \zeta) (u_{1})_{\ell}+ \left(  1- e^{ijk\ell\zeta}
\cos(j \zeta) \right)  (u_{2})_{\ell}\\
\left( 1 - e^{ijk\ell\zeta} \right)  (u_{3})_{\ell}%
\end{pmatrix}.
\]
In Fourier space, the extra initial condition
\eqref{eq:extra_initial_conditions} (given $j=1,\dots,n-1$) is simplified as
\begin{align}
\label{eq:extra_initial_conditions_Fourier}\gamma_{j}(u,w_{j})  &
\bydef w_{j}(0)^{2} \left\|  \sum_{\ell\in\mathbb{Z}} M_{j \ell}u_{\ell} \right\|
^{2}-1\nonumber  = \left(  \sum_{\ell\in\mathbb{Z}} (w_{j})_{\ell}\right) ^{2} \left[
\sum_{p=1}^{3} \left(  \sum_{\ell\in\mathbb{Z}} (M_{j \ell}u_{\ell})_{p}
\right) ^{2} \right]  - 1.
\end{align} 
Set $\gamma: (\ell_{\nu}^{1})^{3} \times(\ell_{\nu}^{1})^{n-1} \to\mathbb{C}^{n-1}$ as 
\begin{equation}
\label{eq:gamma_initial_conditions}
\gamma(u,w) \bydef
\begin{pmatrix}
\gamma_{1}(u,w_{1})\\
\gamma_{2}(u,w_{2})\\
\vdots\\
\gamma_{n-1}(u,w_{n-1})
\end{pmatrix}.
\end{equation}
Hence, $\gamma(u,w) = 0$ implies that \eqref{eq:extra_initial_conditions} holds.

For sake of simplicity of the presentation, given any $N\in\mathbb{N}$, denote
the differentiation operator $D$ acting on $u \in (\ell_\nu^1)^N$ as
\begin{equation} \label{eq:differentiation_Fourier}
(Du)_{\ell} \bydef i \ell u_{\ell} 
=
\begin{pmatrix}
i \ell(u_{1})_{\ell}\\
i \ell(u_{2})_{\ell}\\
\vdots\\
i \ell(u_{N})_{\ell}%
\end{pmatrix}.
\end{equation}

\begin{remark}
The linear operator $D$ is not bounded on $(\ell_\nu^1)^N$. However, it is bounded when considering the image to be slightly less regular. More explicitly, letting 
\begin{equation} \label{eq:tilde_ell_nu_one}
\tilde \ell_\nu^1 \bydef \left\{ c = (c_\ell)_{\ell \in \Z} : |c_0 | + \sum_{\ell \ne 0} |c_\ell | \frac{\nu^{|\ell|}}{|\ell|} < \infty \right\},
\end{equation}
we can easily verify that $D:(\ell_\nu^1)^N \to (\tilde \ell_\nu^1)^N$ is a bounded linear operator.
\end{remark}

Let $f: (\ell_{\nu}^{1})^{3} \times(\ell_{\nu}^{1})^{3} \to(\tilde \ell_{\nu
}^{1})^{3}$ be defined by
\begin{equation}
\label{eq:f_Fourier_map_automatic_differentiation}
f(u,v) \bydef D u - v.
\end{equation}
Note that $f(u,v)=0$ ensures that \eqref{eq:dde1_automatic_differentiation} holds.
Let $g: \C^3 \times (\ell_{\nu}^{1})^{3} \times(\ell_{\nu}^{1})^{3} \times(\ell_{\nu}%
^{1})^{n-1} \times \C \to(\tilde \ell_{\nu}^{1})^{3}$ be defined by
%
%
\begin{equation}
\label{eq:g_Fourier_map_automatic_differentiation}
g(\lambda,u,v,w,\omega)
\bydef
\omega^{2} D v + 2\omega\sqrt{s_{1}}\bar{J}v - s_{1}\bar{I} u + \lambda
_{1}\bar{J}u + \lambda_{2}v + \lambda_{3} \hat e_{3} + \sum_{j=1}^{n-1} 
(M_{j} u) * w_{j}^{3},
\end{equation}
where $ (M_{j} u) * w_{j}^{3} \in(\ell_{\nu}^{1})^{3}$ is given component-wise by
\[
\left(  (M_{j} u) * w_{j}^{3} \right) _{\ell}\,\overset
{\mbox{\tiny\textnormal{\raisebox{0ex}[0ex][0ex]{def}}}}{=}\, 
\begin{pmatrix}
\left( (M_{j} u)_{1} * w_{j}^{3} \right)_{\ell}\\
\left( (M_{j} u)_{2} * w_{j}^{3} \right)_{\ell}\\
\left( (M_{j} u)_{3} * w_{j}^{3} \right)_{\ell}%
\end{pmatrix},
\]
and where $\hat e_{3} \in(\ell_{\nu}^{1})^{3}$ is given component-wise by
\[
(\hat e_{3})_{\ell}\,\overset
{\mbox{\tiny\textnormal{\raisebox{0ex}[0ex][0ex]{def}}}}{=}\,
\begin{pmatrix}
0\\
0\\
\delta_{\ell,0}%
\end{pmatrix}
,
\]
with $\delta_{i,j}$ being the Kronecker delta. Note that $g(\lambda,u,v,w,\omega)=0$ ensures
that \eqref{eq:dde2_automatic_differentiation} holds.

Let $h_{j}: \C \times (\ell_{\nu}^{1})^{3} \times(\ell_{\nu}^{1})^{3} \times\ell_{\nu
}^{1} \to \tilde \ell_{\nu}^{1}$ be defined by
\begin{equation}
\label{eq:h_j_Fourier_map_automatic_differentiation}
h_{j}(\alpha_j,u,v,w_{j})
\bydef
D w_{j} + w_{j}^{3}* \left(  \sum_{p=1}^{3} (M_{j} u)_{p} * (M_{j} v)_{p}
\right)  + \alpha_{j} w_{j}^{3}%
\end{equation}
and let $h: \C^{n-1} \times (\ell_{\nu}^{1})^{3} \times(\ell_{\nu}^{1})^{3} \times(\ell_{\nu
}^{1})^{n-1} \to(\tilde \ell_{\nu}^{1})^{n-1}$ be defined by
\begin{equation}
\label{eq:h_Fourier_map_automatic_differentiation}
h(\alpha,u,v,w)
\bydef
\begin{pmatrix}
h_{1}(\alpha_1,u,v,w_{1})\\
h_{2}(\alpha_2,u,v,w_{2})\\
\vdots\\
h_{n-1}(\alpha_{n-1},u,v,w_{n-1})
\end{pmatrix}.
\end{equation}
Hence, $h(\alpha,u,v,w)=0$ implies that \eqref{eq:dde3_automatic_differentiation} hold.

Defining 
\begin{equation} \label{eq:Banach_space_Y}
Y \bydef
\mathbb{C}^{3} \times\mathbb{C}^{n-1} \times(\tilde \ell_{\nu}^{1})^{3} \times
(\tilde \ell_{\nu}^{1})^{3} \times(\tilde \ell_{\nu}^{1})^{n-1}
\end{equation}
the Fourier map $F:X \times \R \to Y$ is defined by 
%
\begin{equation}
\label{eq:F_Fourier_map_automatic_differentiation}
F(x,\omega) 
\bydef
\begin{pmatrix}
\eta(u)\\
\gamma(u,w)\\
f(u,v)\\
g(\lambda,u,v,w,\omega)\\
h(\alpha,u,v,w)
\end{pmatrix}.
\end{equation}
For a fixed $\omega>0$, we introduce in Section~\ref{sec:a-posteriori-validation} an a-posteriori 
validation method for the Fourier map, that is we develop a systematic and constructive approach to prove existence of $x \in X$ 
such that $F(x,\omega) =0$. By construction, the solution $x$ yields a choreography having the prescribed symmetry \eqref{Sy} 
and the topological property of a torus knot. 

\section{A-posteriori validation for the Fourier map} \label{sec:a-posteriori-validation}

The idea of the computer-assisted proof of existence of a spatial torus-knot choreography is to demonstrate that a certain Newton-like operator is a contraction on a closed ball centered at a numerical approximation $\bx$. To compute $\bx$, we consider a finite dimensional projection of the Fourier map $F: X \times \R \to Y$. Given a number $m \in \N$, and given a vector $a = (a_\ell)_{\ell \in \Z} \in \ell_\nu^1$, consider the projection
\begin{align*}
\pi^m : \ell_\nu^1& \to \C^{2m-1} \\ 
a &\mapsto \pi^m a \bydef (a_\ell)_{|\ell|<m} \in \C^{2m-1}.
\end{align*}
We generalize that projection to get $\pi_N^m:(\ell_\nu^1)^{N} \to \C^{N(2m-1)}$ defined by
\[
\pi_N^m(a^{(1)}, \dots, a^{(N)}) \bydef ( \pi^m a^{(1)},\dots,\pi^m a^{(N)}) \in \C^{N(2m-1)}
\]
and $\Pi^{(m)}: X  \to \C^{2m(n + 5) - 3}$ defined by
\[
\Pi^{(m)} x = \Pi^{(m)} (\lambda,\alpha,u,v,w) \bydef \left( \lambda,\alpha,\pi_3^m u, \pi_3^m v,\pi_{n-1}^mw \right) \in \C^{2m(n + 5) - 3}.
\]
Often, given $x \in X$, we denote
\[
x^{(m)} \bydef \Pi^{(m)} x \in \C^{2m(n + 5) - 3}.
\]
Moreover, we define the natural inclusion $\iota^m : \C^{2m-1} \xhookrightarrow{} \ell_\nu^1$ as follows. For $a = (a_\ell)_{|\ell|<m} \in \C^{2m-1}$ let $\iota^m a \in \ell_\nu^1$ be defined component-wise by
\[
\left( \iota^m a \right)_{\ell}
= 
\begin{cases}
a_\ell, & |\ell|<m
\\
0, & |\ell| \ge m.
\end{cases}
\]
Similarly, let $\iota_N^m:\C^{N(2m-1)} \xhookrightarrow{} (\ell_\nu^1)^{N}$ be the natural inclusion defined as follows. Given $a = (a^{(1)},\dots,a^{(N)}) \in (\C^{2m-1})^N \cong \C^{N(2m-1)}$, 
\[
\iota_N^m a \bydef \left( \iota^m a^{(1)}, \dots,\iota^m a^{(N)} \right) \in (\ell_\nu^1)^{N}.
\]
Finally, let the natural inclusion $\inc : \mathbb{C}^{2m(n + 5) - 3} \xhookrightarrow{} X$ be defined, for $x \in \C^{2m(n + 5) - 3}$ as 
\[
\inc x = \inc(\lambda,\alpha,u,v,w) \bydef \left( \lambda,\alpha,\iota_3^m u, \iota_3^m v,\iota_{n-1}^mw \right) \in X.
\]
Finally, let the {\em finite dimensional projection} $F^{(m)} : \mathbb{C}^{2m(n + 5) - 3} \to\mathbb{C}^{2m(n + 5) - 3}$ of the Fourier map be defined, for $x \in \C^{2m(n + 5) - 3}$, as 
\begin{equation} \label{eq:F_Fourier_map_projection}
F^{(m)}(x,\omega) = \Pi^{(m)} F(\inc x,\omega).
\end{equation}
Also denote $F^{(m)} = \left( \eta^{(m)},\gamma^{(m)},f^{(m)},g^{(m)},h^{(m)} \right)$.

Assume that, using Newton's method, a numerical approximation $\bx \in \mathbb{C}^{2m(n + 5) - 3}$ of \eqref{eq:F_Fourier_map_projection} has been obtained at a parameter (frequency) value $\omega$, that is $F^{(m)}(\bx,\omega) \approx 0$. We slightly abuse the notation and denote $\bx \in \mathbb{C}^{2m(n + 5) - 3}$ and $\inc \bx \in X$ both using $\bx$.

We now fix an $\omega_0 \in \mathbb{R}$ and consider the mapping
$F \colon X \to Y$ defined by $F(x) = F(x, \omega_0)$.
The following result is a Newton-Kantorovich theorem with a smoothing 
approximate inverse. It provides an a-posteriori validation method for 
proving rigorously the existence of a point $\tx$ such that $F(\tx)=0$ 
and $\| \tx - \bx\|_X \le r$ for a small radius $r$. Recalling the norm on 
$X$ given in \eqref{eq:Banach_space_X}, denote by 
\[
B_{r} (y) \bydef \left\{ x \in X : \| x - y \|_X \le r \right\} \subset X
\]
the ball of radius $r$ centered at $y \in X$.

\begin{theorem}[\bf Radii Polynomial Approach] \label{thm:radPolyBanach}
For $\bar x \in X$ and $r > 0$  assume that 
$F: X \to Y$ is Fr\'echet differentiable on the ball $B_r(\bar x)$.
Consider bounded linear operators $A^{\dagger} \in B(X,Y)$ and $A \in B(Y,X)$, 
where $A^{\dagger}$ is an approximation of $D F(\bx)$ and $A$
is an approximate inverse of $D F(\bx)$. 
Observe that 
\begin{equation} \label{eq:AF:X->X}
A F \colon X  \to X.
\end{equation}
Assume that $A$ is injective.
Let $Y_0, Z_0,Z_1,Z_2 \ge 0$ be bounds satisfying
\begin{align}
\label{eq:Y0_radPolyBanach}
\| A F(\bx) \|_X &\le Y_0,
\\
\label{eq:Z0_radPolyBanach}
\| I - A A^{\dagger}\|_{B(X)} &\le Z_0,
\\
\label{eq:Z1_radPolyBanach}
\| A[D F(\bx) - A^{\dagger} ] \|_{B(X)} &\le Z_1,
\\
\label{eq:Z2_radPolyBanach}
\| A[D F(\bx+b) - D F(\bx)]\|_{B(X)} &\le Z_2 r, \quad \forall~ b \in B_r(0).
\end{align}
Define the radii polynomial
\begin{equation} \label{eq:radii_polynomial}
p(r) \bydef Z_2 r^2 + (Z_1 + Z_0 - 1) r + Y_0.
\end{equation}
If there exists $0 < r_0 \leq r$ such that
\begin{equation} \label{eq:p(r0)<0}
p(r_0) < 0,
\end{equation}
then there exists a unique $\tx \in B_{r_0} (\bx)$ such that $F(\tx) = 0$.
\end{theorem}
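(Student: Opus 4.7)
The plan is to apply the Banach fixed point theorem to the Newton-like operator
\[
T \colon X \to X, \qquad T(x) \bydef x - A F(x),
\]
which is well-defined into $X$ thanks to \eqref{eq:AF:X->X}. Any fixed point $\tilde x$ of $T$ satisfies $A F(\tilde x) = 0$, and the assumed injectivity of $A$ then upgrades this to $F(\tilde x) = 0$; uniqueness of the zero inside $B_{r_0}(\bar x)$ is inherited from uniqueness of the fixed point. The entire content of the theorem therefore reduces to showing that $T$ is a contraction on $B_{r_0}(\bar x)$ which maps that ball into itself.

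I would first control the Lipschitz constant of $T$ via the fundamental theorem of calculus,
\[
T(x_1) - T(x_2) = \int_0^1 \bigl( I - A\, DF(x_2 + s(x_1 - x_2)) \bigr)(x_1 - x_2)\, ds,
\]
so that a uniform estimate on $\| I - A\, DF(x) \|_{B(X)}$ over $x \in B_{r_0}(\bar x)$ suffices. The telescoping identity
\[
I - A\, DF(x) = (I - A A^{\dagger}) + A (A^{\dagger} - DF(\bar x)) + A (DF(\bar x) - DF(x))
\]
together with the hypotheses \eqref{eq:Z0_radPolyBanach}, \eqref{eq:Z1_radPolyBanach}, \eqref{eq:Z2_radPolyBanach} applied at radius $r_0 \le r$ yields the bound $\| I - A\, DF(x) \|_{B(X)} \le Z_0 + Z_1 + Z_2 r_0$. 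The key algebraic observation is that $p(r_0) < 0$, combined with $Y_0 \ge 0$ and $r_0 > 0$, forces this to be strictly less than $1$: dividing $Z_2 r_0^2 + (Z_1 + Z_0 - 1) r_0 + Y_0 < 0$ by $r_0$ gives $Z_2 r_0 + (Z_0 + Z_1 - 1) < -Y_0/r_0 \le 0$. Hence $T$ is a contraction on $B_{r_0}(\bar x)$ with constant $\kappa \bydef Z_0 + Z_1 + Z_2 r_0 < 1$.

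For the self-map property I would use the decomposition $T(x) - \bar x = (T(x) - T(\bar x)) + (T(\bar x) - \bar x)$: the second summand equals $-A F(\bar x)$ and is bounded by $Y_0$ via \eqref{eq:Y0_radPolyBanach}, while the contraction estimate just established bounds the first summand by $\kappa r_0$. Together,
\[
\| T(x) - \bar x \|_X \le Y_0 + \kappa r_0 = Y_0 + (Z_0 + Z_1) r_0 + Z_2 r_0^2 = p(r_0) + r_0 < r_0,
\]
so $T$ sends $B_{r_0}(\bar x)$ into itself. Banach's fixed point theorem then produces the desired unique $\tilde x \in B_{r_0}(\bar x)$, and injectivity of $A$ converts it into the unique zero of $F$ inside this ball.

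No individual step is particularly delicate --- this is a standard Newton-Kantorovich argument repackaged in the radii polynomial form convenient for a-posteriori validation. The only noteworthy subtlety is the algebraic bookkeeping: the quadratic structure of $p$ is rigged precisely so that the single inequality $p(r_0) < 0$ simultaneously encodes both the contraction bound (coming from the linear and quadratic coefficients) and the self-map bound (absorbing the residual $Y_0$), which is what makes the whole radii polynomial scheme computationally tractable.
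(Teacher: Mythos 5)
Your proof is correct and follows the same strategy the paper itself sketches: define the Newton-like operator $T(x) = x - AF(x)$, establish the self-map and contraction properties on $B_{r_0}(\bar x)$ from the $Y_0, Z_0, Z_1, Z_2$ bounds and $p(r_0) < 0$, and conclude via the contraction mapping theorem and injectivity of $A$. The paper delegates the details to Appendix A of \cite{MR3612178}, while you spell them out; the telescoping decomposition, the division of $p(r_0)<0$ by $r_0$ to isolate the contraction constant, and the self-map bound $Y_0 + \kappa r_0 = p(r_0) + r_0 < r_0$ are all exactly the standard mechanism that reference uses.
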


\begin{proof}
Details of the elementary proof are found in Appendix A of \cite{MR3612178}. 
The idea is to first show that $T(x) \bydef x - A F(x)$ satisfies $T(B_{r_0} (\bx)) \subset B_{r_0} (\bx)$,
and then to show the existence of $\kappa<1$ such that $\| T (x)-T(y)\|_X \le \kappa \|x-y\|_X$ 
for all $x,y \in B_{r_0} (\bx)$. 
These facts follow from the inequalities of Equations \eqref{eq:Y0_radPolyBanach}, 
\eqref{eq:Z0_radPolyBanach}, \eqref{eq:Z1_radPolyBanach},  \eqref{eq:Z2_radPolyBanach},
and from the hypothesis that $p(r_0) < 0$.  
The proof then follows from the contraction mapping theorem and 
the injectivity of $A$. 
\end{proof}

\noindent The following corollary provides an additional useful byproduct.

\begin{corollary}[{\bf Non-degeneracy at the true solution}] \label{cor:nonDegen}
Given the hypotheses of Theorem \ref{thm:radPolyBanach}, the linear operator 
$ADF(\tilde x)$ is boundedly invertible with
\[
  \| \left[A DF(\tilde x)\right]^{-1} \|_{B(X)} \leq \frac{1}{1 - ( Z_2 r_0 + Z_1 + Z_0)}.
\] 
\end{corollary}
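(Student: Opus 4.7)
The plan is to establish that $ADF(\tilde x)$ is close to the identity on $X$ and then invoke the Neumann series to get bounded invertibility with the claimed quantitative bound. The starting point is the algebraic identity
\[
I - A\, DF(\tilde x) = (I - A A^{\dagger}) + A(A^{\dagger} - DF(\bar x)) + A(DF(\bar x) - DF(\tilde x)),
\]
which splits the difference between the identity and $A\,DF(\tilde x)$ into the three pieces that are precisely controlled by the hypotheses \eqref{eq:Z0_radPolyBanach}, \eqref{eq:Z1_radPolyBanach}, and \eqref{eq:Z2_radPolyBanach} of Theorem~\ref{thm:radPolyBanach}.

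Next I would apply the triangle inequality on $B(X)$ to estimate each of the three terms. The first is bounded by $Z_0$ directly. The second is bounded by $Z_1$, noting that $A[DF(\bar x) - A^\dagger] = -A[A^\dagger - DF(\bar x)]$ has the same operator norm. For the third term I use that $\tilde x \in B_{r_0}(\bar x) \subset B_r(\bar x)$, so writing $\tilde x = \bar x + b$ with $\|b\|_X \le r_0 \le r$, the bound \eqref{eq:Z2_radPolyBanach} specialized at $b$ gives $\|A[DF(\tilde x) - DF(\bar x)]\|_{B(X)} \le Z_2 r_0$. Combining,
\[
\|I - A\, DF(\tilde x)\|_{B(X)} \le Z_0 + Z_1 + Z_2 r_0.
\]

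The next step is to observe that $Z_0 + Z_1 + Z_2 r_0 < 1$. This follows from the assumption $p(r_0) < 0$: since $Y_0 \ge 0$ and $r_0 > 0$, the inequality $Z_2 r_0^2 + (Z_1 + Z_0 - 1) r_0 + Y_0 < 0$ forces $Z_2 r_0 + Z_1 + Z_0 - 1 < -Y_0/r_0 \le 0$. Consequently the Neumann series $\sum_{k\ge 0} (I - A\, DF(\tilde x))^k$ converges in $B(X)$ and provides a bounded inverse for $A\,DF(\tilde x)$, with
\[
\bigl\|[A\, DF(\tilde x)]^{-1}\bigr\|_{B(X)} \le \frac{1}{1 - \|I - A\, DF(\tilde x)\|_{B(X)}} \le \frac{1}{1 - (Z_2 r_0 + Z_1 + Z_0)},
\]
which is the stated bound. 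There is no real obstacle here; the proof is a clean three-term decomposition followed by a standard Neumann series estimate, so the only thing to be careful about is that the radius $r_0$ at which we evaluate the Lipschitz bound is the same $r_0$ coming out of the radii polynomial inequality, and that this $r_0$ lies in $(0,r]$ so that \eqref{eq:Z2_radPolyBanach} is applicable.
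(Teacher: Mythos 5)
Your proposal is correct and follows essentially the same argument as the paper: the identical three-term decomposition of $I - A\,DF(\tilde x)$ bounded by $Z_0 + Z_1 + Z_2 r_0$, the observation that $p(r_0) < 0$ forces this sum below $1$, and the Neumann series to conclude. The only difference is a trivial reordering of the steps.
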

\begin{proof}
From
\[
p(r_0) < 0,
\]
we obtain 
\[
Z_2 r_0^2 + (Z_1 + Z_0)r_0 + Y_0 < r_0,
\]
or
\[
Z_2 r_0 + (Z_1 + Z_0) + \frac{Y_0}{r_0} < 1.
\]
Since $Y_0$ and $r_0$ are both positive it follows that 
\[
Z_2 r_0 + (Z_1 + Z_0) < 1.
\]
Since $\tilde x \in B_{r_0}(\bar x)$ we have that $\tilde x = \bar x + b$
for some $b \in B_{r_0}(0)$, and by applying the 
inequalities of Equations 
\eqref{eq:Z0_radPolyBanach}, \eqref{eq:Z1_radPolyBanach}, and 
\eqref{eq:Z2_radPolyBanach} we have that 
\begin{align*}
\| \mbox{Id} - ADF(\tilde x) \|_{B(X)}  & \leq 
\| A (DF(\bar x + b) - DF(\bar x)) \|  
+ \| A (A^\dagger - DF(\bar x)) \|
+  \| \mbox{Id} - AA^\dagger \|\\
& \leq   Z_2 r_0 + Z_1 + Z_0  \\
& < 1.
\end{align*}
Then 
\[
A DF(\tilde x) = \mbox{Id} - \left( \mbox{Id} - ADF(\tilde x) \right), 
\]
is invertible by the Neuman theorem and
\[
\| \left[A DF(\tilde x)\right]^{-1} \| \leq \frac{1}{1 - ( Z_2 r_0 + Z_1 + Z_0)},
\]
as desired.
\end{proof}

Returning to the parameter dependent problem, suppose that $\tilde x$ is 
a zero of $F(x) = F(x, \omega_0)$ and that $A DF(\tilde x) = A D_x F(\tilde x, \omega_0)$ 
is boundedly invertible as above.
Notice that $F(x, \omega)$ is differentiable with respect to $\omega$ near $\omega_0$.
Define the mapping $G(x, \omega) = AF(x, \omega)$ and observe that 
$G$ and $F$ have the same zero set as $A$ is injective.  Observe also that 
$D_x G(x, \omega) = AD_x F(x, \omega)$.  
So $(\tilde x, \omega_0)$ is a zero of $G$ with $D_x G(\tilde x, \omega_0)$ an isomorphism, 
it follows from the implicit function theorem that $G$ has a smooth branch of 
zeros through $\tilde x$.  More precisely there exists an $\epsilon > 0$ and a smooth 
function $x \colon (\omega_0-\epsilon, \omega_0+\epsilon) \to X$ with $x(\omega_0) = \tilde x$ 
and
\[
G(x(\omega), \omega) = 0,
\]
for all $\omega \in (\omega_0-\epsilon, \omega_0+\epsilon)$.  It follows again from the injectivity of $A$
that $F(x(\omega), \omega) = 0$ for all $\omega \in (\omega_0-\epsilon, \omega_0+\epsilon)$.  
Finally, as discussed in the introduction, we obtain that
for any rational number $\sqrt{s_{1}}p/q \in (\omega_0-\epsilon, \omega_0+\epsilon)$ , the 
solution
$x(\sqrt{s_{1}}p/q)$ produces spatial torus knot choreography orbit near $\tilde x$ by proposition 4.
Taken together the results of this section show that our method produces the 
existence of countably many spatial torus knot choreographies as soon as 
Theorem \ref{thm:radPolyBanach}
succeeds at a given $\omega_0$.

\subsection{Isolated solutions yield real periodic solutions}

In this short section, we show how the output $\tx \in B_{r_0}(\bx)$ of Theorem~\ref{thm:radPolyBanach} (if any) yields a real periodic solution, provided the numerical approximation is chosen to represent a real periodic solution.

Define the operator $\sigma:\ell_\nu^1 \to \ell_\nu^1$ by $(\sigma(a))_\ell \bydef a_{-\ell}^*, \quad \text{for } \ell \in \Z$, where $z^*$ denotes the complex conjugate of $z \in \C$.
Define the symmetry subspace $\ell_\nu^{1,\real} \subset \ell_\nu^1 $ by
\[
\ell_\nu^{1,\real} \bydef \left\{ c \in \ell_\nu^1 :\sigma(c) = c \right\}.
\]
Note that if $(u_\ell)_{\ell \in \Z} \in \ell_\nu^{1,\real}$, then the function 
$u(t) \bydef \sum_{\ell \in \Z} u_\ell e^{i \ell t}$ is a real $2\pi$-periodic function. Define the operator $\Sigma: X \to X$ acting on $x = (\lambda, \alpha, u, v, w) \in X$ as 
\[
\Sigma(x) = 
\left(
\lambda^*,\alpha^*, \sigma(u_1),\sigma(u_2),\sigma(u_3), \sigma(v_1),\sigma(v_2),\sigma(v_3), \sigma(w_1),\dots,\sigma(w_{n-1}) 
\right),
\] 
where $\lambda^*  \in \C^3$ and $\alpha^* \in \C^{n-1}$ denote the component-wise complex conjugate of $\lambda \in \C^3$ and $\alpha \in \C^{n-1}$, respectively. Define the subspace $X_\real \subset X$ as
\begin{equation} \label{eq:sym_Banach_space}
X_\real \bydef \left\{ x \in X : \Sigma(x) =x \right\}.
\end{equation}
It follows by definition that $X_\real= \R^{n+2} \times(\ell_{\nu}^{1,\real})^{n+5}$.
\begin{proposition} \label{prop:real_solutions}
Fix a frequency $\omega>0$ and assume that the numerical approximation denoted $\bx = (\bar \lambda,\bar \alpha, \bu, \bv,\bw)$ satisfies $\bx \in X_{\real}$ and that the reference solution $\tu = (\tu_1,\tu_2,\tu_3)$ satisfies $\tu \in (\ell_\nu^{1,\real})^3$. Assume that there exists a unique $x \in B_r(\bx)$ such that $F( x,\omega)=0$. Then $x \in X_{\real}$.
\end{proposition}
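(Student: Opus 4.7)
The plan is a standard symmetry-plus-uniqueness argument. First I would verify that $\Sigma \colon X \to X$ is a conjugate-linear involution whose fixed-point set is exactly $X_\real$ and which is an $\|\cdot\|_X$-isometry: on the finite-dimensional factor this is $|\bar z|=|z|$, while on each $\ell_\nu^1$-factor the map $\sigma$ merely reindexes coefficients by $\ell\mapsto -\ell$, under which the weight $\nu^{|\ell|}$ is invariant. Combined with the hypothesis $\Sigma(\bx)=\bx$, this yields $\Sigma(B_r(\bx)) = B_r(\bx)$. The key step is to establish the equivariance identity
\begin{equation*}
F(\Sigma(x),\omega) \,=\, \Sigma_Y(F(x,\omega)),
\end{equation*}
where $\Sigma_Y$ acts on $Y$ by complex conjugation on the $\C^{n+2}$-factor and by $\sigma$ on each $\tilde\ell_\nu^1$-factor (which, by the same reindexing argument, preserves $\tilde\ell_\nu^1$). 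Granting this identity, if $x\in B_r(\bx)$ is the unique zero of $F(\cdot,\omega)$, then $\Sigma(x)\in B_r(\bx)$ is also a zero, and uniqueness forces $\Sigma(x)=x$, i.e., $x\in X_\real$.

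Proving the equivariance identity reduces to three elementary commutations: (i) $\sigma D = D\sigma$, immediate from $\overline{i(-\ell)a_{-\ell}} = i\ell\,\overline{a_{-\ell}}$; (ii) $\sigma(a*b) = \sigma(a)*\sigma(b)$, by relabeling the convolution sum; and (iii) $M_j\sigma = \sigma M_j$ on $(\ell_\nu^1)^3$, which follows from $\overline{M_{j,-\ell}}=M_{j,\ell}$. Fact (iii) is the crucial compatibility: in the definition of $M_{j\ell}$ the matrix $e^{j\bar J\zeta}$ is \emph{real} (since $\bar J=\mathrm{diag}(J,0)$ is real, $e^{j\bar J\zeta}$ is a planar rotation), while $\overline{e^{-ijk\ell\zeta}}=e^{ijk\ell\zeta}$; the two combine to give (iii).

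With (i)--(iii) and the reality of $\omega$, $s_1$, $\bar I$, $\bar J$, $\hat e_3$ and of the reference $\tilde u$, the identity $F(\Sigma(x),\omega)=\Sigma_Y(F(x,\omega))$ becomes a mechanical component-by-component verification across the pieces \eqref{eq:eta}, \eqref{eq:gamma_initial_conditions}, \eqref{eq:f_Fourier_map_automatic_differentiation}, \eqref{eq:g_Fourier_map_automatic_differentiation}, and \eqref{eq:h_Fourier_map_automatic_differentiation}. The phase functionals $\eta_i$ are zero-indexed coefficients of convolutions of $u$ with the real $\tilde u$, so they simply complex-conjugate; $\gamma_j$ is a polynomial in zero-indexed sums of $u$ and $w_j$ with real coefficients and conjugates in the same way; $f=Du-v$ uses (i); $g$ uses (i)--(iii) together with the fact that $\lambda_1,\lambda_2,\lambda_3$ conjugate along with the rest of $x$; and $h_j$ is analogous with $\alpha_j$ playing the role of the $\lambda$'s.

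The main obstacle is not conceptual but notational: one must carefully align the index reflection $\ell\mapsto -\ell$ intrinsic to $\sigma$ with the conjugation of the phase factors $e^{ijk\ell\zeta}$ inside $M_{j\ell}$, which is exactly what identity (iii) accomplishes, and where the reality of the coordinate change $e^{j\bar J\zeta}$ enters. Once the equivariance identity is in hand, invoking the uniqueness clause of Theorem~\ref{thm:radPolyBanach} in the $\Sigma$-invariant ball $B_r(\bx)$ concludes $\Sigma(x)=x$ and hence $x\in X_\real$.
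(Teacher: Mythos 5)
Your proof follows the same two-step strategy as the paper's: show that $\Sigma(x)$ is also a zero of $F(\cdot,\omega)$ and that $\Sigma$ is an isometry fixing $\bar x$, then invoke uniqueness in the ball. Where the paper simply asserts the equivariance $F(\Sigma(x),\omega)=\Sigma(F(x,\omega))$ ``since the operator $F$ corresponds to the complex extension of a real equation,'' you supply the actual verification via the commutation identities $\sigma D=D\sigma$, $\sigma(a*b)=\sigma(a)*\sigma(b)$, and $\overline{M_{j,-\ell}}=M_{j\ell}$ (using the reality of $e^{j\bar J\zeta}$); this is a correct and more explicit rendering of the same argument.
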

\begin{proof}
Denote the solution $x = (\lambda,\alpha,u,v,w) \in B_{r}(\bx)$. The proof is twofold: (1) show that $F(\Sigma(x),\omega)=0$; and (2) show that $\Sigma(x) \in B_{r}(\bx)$. The conclusion $\Sigma(x) = x$ (that is $x \in X_{\real}$) then follows by unicity of the solution.  First, we have that $F(\Sigma(x),\omega)= \Sigma \left( F(x,\omega) \right)$, since the operator  $F$ corresponds to the complex extension of a real equation. Since $F(x,\omega)=0$, then $F(\Sigma(x),\omega) = \Sigma \left( F(x,\omega) \right) = \Sigma \left( 0 \right) = 0$. Second, to prove that $\Sigma(x) \in B_{r}(\bx)$, it is sufficient to realize that $|z^*| = |z|$ and that given any $c \in \ell_\nu^1$,
\begin{equation} \label{eq:sigma_invariance_under_norm}
\| \sigma(c) \|_{\nu} = \sum_{\ell \in \Z} |\sigma(c)_\ell| \nu^{|\ell|} 
 = \sum_{\ell \in \Z} |c^*_{-\ell}| \nu^{|\ell|} =  \sum_{\ell \in \Z} |c_{\ell}| \nu^{|\ell|} = \| c \|_{\nu},
\end{equation}
which shows that for any $\xi \in X$, $\| \Sigma(\xi) \|_X = \| \xi \|_X$. Hence, since $\Sigma(\bx)=\bx$, we conclude that  
\[
\| \Sigma(x) - \bx \|_X =  \| \Sigma(x) - \Sigma(\bx) \|_X = \| \Sigma(x - \bx) \|_X = \| x - \bx \|_X \le r. \qedhere
\]
\end{proof}

\subsection{Definition of the operators \boldmath$A^\dagger$\unboldmath~and \boldmath$A$\unboldmath} \label{sec:operators_A_dagA}

To apply the radii polynomial approach of Theorem~\ref{thm:radPolyBanach}, we need to define the approximate derivative 
$A^\dagger$ and the smoothing approximate inverse $A$. Consider the finite dimensional projection $F^{(m)} : \mathbb{C}^{2m(n + 5) - 3} \to\mathbb{C}^{2m(n + 5) - 3}$ and assume that at a fixed frequency $\omega>0$ we computed $\bx \in \mathbb{C}^{2m(n + 5) - 3}$ such that $F^{(m)}(\bx,\omega) \approx 0$. Denote by $DF^{(m)}(\bx,\omega) \in M_{2m(n + 5) - 3}(\C)$ the Jacobian matrix of $F^{(m)}$ at $(\bx,\omega)$. Given $x \in X$, define 
\begin{equation} \label{eq:A_dagger}
\dagA x = \inc \Pi^{(m)} \dagA x + (I- \inc \Pi^{(m)}) \dagA x,
\end{equation}
where $\Pi^{(m)} \dagA x = DF^{(m)}(\bx,\omega) x^{(m)}$ and 
\[
(I- \inc \Pi^{(m)}) \dagA x = 
\begin{pmatrix} 0 \\ 0 \\ 
(I - \iota_3^m \pi_3^m) Du
\\
\omega^2 (I - \iota_3^m \pi_3^m) Dv
\\
(I - \iota_{n-1}^m \pi_{n-1}^m) Dw
\end{pmatrix}.
\]
Recalling the definition of the Banach space $Y$ in \eqref{eq:Banach_space_Y}, we can verify that the operator $\dagA:X \to Y$ is a bounded linear operator, that is $\dagA \in B(X,Y)$. For $m$ large enough, it acts as an approximation of the true Fr\'echet derivative $D_xF(\bx,\omega)$. Its action on the finite dimensional projection is the Jacobian matrix (the derivative) of $F^{(m)}$ at $(\bx,\omega)$ while its action on the tail keeps only keep the unbounded terms involving the differentiation $D$ as defined in \eqref{eq:differentiation_Fourier}.

Consider now a matrix $A^{(m)} \in M_{2m(n + 5) - 3}(\C)$ computed so that $A^{(m)} \approx {DF^{(m)}(\bx,\omega)}^{-1}$. In other words, this means that $\| I - A^{(m)} DF^{(m)}(\bx,\omega) \| \ll 1$. This step is performed using a numerical software ({\tt MATLAB} in our case). We decompose the matrix $A^{(m)}$ block-wise as
\[
A^{(m)}=
\begin{pmatrix}
A^{(m)}_{\lambda,\lambda} & A^{(m)}_{\lambda,\alpha} & A^{(m)}_{\lambda,u} & A^{(m)}_{\lambda,v} & A^{(m)}_{\lambda,w}
\\
A^{(m)}_{\alpha,\lambda} & A^{(m)}_{\alpha,\alpha} & A^{(m)}_{\alpha,u} & A^{(m)}_{\alpha,v} & A^{(m)}_{\alpha,w}
\\
A^{(m)}_{u,\lambda} & A^{(m)}_{u,\alpha} & A^{(m)}_{u,u} & A^{(m)}_{u,v} & A^{(m)}_{u,w}
\\
A^{(m)}_{v,\lambda} & A^{(m)}_{v,\alpha} & A^{(m)}_{v,u} & A^{(m)}_{v,v} & A^{(m)}_{v,w}
\\
A^{(m)}_{w,\lambda} & A^{(m)}_{w,\alpha} & A^{(m)}_{w,u} & A^{(m)}_{w,v} & A^{(m)}_{w,w}
\end{pmatrix}
\]
so that it acts on $x^{(m)} = (\lambda,\alpha,u^{(m)},v^{(m)},w^{(m)}) \in \C^{2m(n + 5) - 3}$.
Thus we define $A$ as
\begin{equation} \label{eq:A}
A =
\begin{pmatrix}
A_{\lambda,\lambda} & A_{\lambda,\alpha} & A_{\lambda,u} & A_{\lambda,v} & A_{\lambda,w}
\\
A_{\alpha,\lambda} & A_{\alpha,\alpha} & A_{\alpha,u} & A_{\alpha,v} & A_{\alpha,w}
\\
A_{u,\lambda} & A_{u,\alpha} & A_{u,u} & A_{u,v} & A_{u,w}
\\
A_{v,\lambda} & A_{v,\alpha} & A_{v,u} & A_{v,v} & A_{v,w}
\\
A_{w,\lambda} & A_{w,\alpha} & A_{w,u} & A_{w,v} & A_{w,w}
\end{pmatrix},
\end{equation}
where the action of each block of $A$ is finite (that is they act on $x^{(m)} = \Pi^{(m)} x$ only) except for the three diagonal blocks $A_{u,u}$, $A_{v,v}$ and $A_{w,w}$ which have infinite tails. More explicitly, for each $p=1,2,3$,
\begin{align*}
((A_{u,u} u )_p)_\ell &= 
\begin{cases}
\bigl( (A_{u,u}^{(m)} \pi_3^m u)_p \bigr)_\ell & \quad\text{for }   |\ell| < m, \\
\frac{1}{i \ell} (u_p)_\ell  & \quad\text{for }  |\ell| \ge m,
\end{cases}
\\
((A_{v,v} v )_p)_\ell &= 
\begin{cases}
\bigl( (A_{v,v}^{(m)} \pi_3^m v)_p \bigr)_\ell & \quad\text{for }   |\ell| < m, \\
\frac{1}{i \ell \omega^2} (v_p)_\ell  & \quad\text{for }  |\ell| \ge m,
\end{cases}
\end{align*}
and for each $j=1,\dots,n-1$, 
\[
((A_{w,w} w )_j)_\ell = 
\begin{cases}
\bigl( (A_{w,w}^{(m)} \pi_{n-1}^m w)_j \bigr)_\ell & \quad\text{for }   |\ell| < m, \\
\frac{1}{i \ell} (w_j)_\ell  & \quad\text{for }  |\ell| \ge m.
\end{cases}
\]

Having defined the operators $A$ and $\dagA$, we are ready to define the bounds $Y_0$, $Z_0$, $Z_1$ and $Z_2$ (satisfying \eqref{eq:Y0_radPolyBanach}, \eqref{eq:Z0_radPolyBanach}, \eqref{eq:Z1_radPolyBanach} and \eqref{eq:Z2_radPolyBanach}, respectively), required to build the radii polynomial defined on \eqref{eq:radii_polynomial}.

\section{The technical estimates for the Fourier map} \label{sec:bounds}

In this section, we introduce explicit formulas for the theoretical bounds \eqref{eq:Y0_radPolyBanach}, \eqref{eq:Z0_radPolyBanach}, \eqref{eq:Z1_radPolyBanach} and \eqref{eq:Z2_radPolyBanach}. While most of the work is analytical, the actual definition of the bounds still requires computing and verifying inequalities. In particular, there are many occasions in which the most practical means of obtaining necessary explicit inequalities is by using the computer. However, as  floating point arithmetic is only capable of representing a finite set of rational numbers, round off errors in the computation of the bounds can be dealt with by using interval arithmetic \cite{MR0231516} where real numbers are represented by intervals bounded by rational numbers that have floating point representation. Furthermore, there is software that performs interval arithmetic (e.g. {\tt INTLAB} \cite{Ru99a}) which we use for completing our computer-assisted proofs. With this in mind, in this section, when using phrases of the form {\em we can compute the following bounds}, this should be interpreted as shorthand for the statement {\em using the interval arithmetic software {\tt INTLAB} we can compute the following bounds}.

\subsection{\boldmath$Y_0$\unboldmath~bound} \label{sec:Y0}

Denote the numerical approximation $\bx = (\bar \lambda,\bar \alpha,\bu,\bv,\bw) \in X$ with 
$\bu=(\bu_1,\bu_2,\bu_3) \in (\ell_\nu^1)^3$, 
$\bv=(\bv_1,\bv_2,\bv_3) \in (\ell_\nu^1)^3$ and $\bw=(\bw_1,\dots,\bw_{n-1}) \in (\ell_\nu^1)^{n-1}$. 
Recalling \eqref{eq:f_Fourier_map_automatic_differentiation},  \eqref{eq:g_Fourier_map_automatic_differentiation}
and \eqref{eq:h_j_Fourier_map_automatic_differentiation}, one has that
\begin{align*}
(I - \iota_3^m \pi_3^m)f(\bu,\bv) &= 0 \in (\ell_\nu^1)^3
\\
(I - \iota_3^{4m-4} \pi_3^{4m-4}) g(\bar \lambda,\bu,\bv,\bw,\omega) &= 0 \in (\ell_\nu^1)^3
\\
(I - \iota_{n-1}^{5m-5} \pi_{n-1}^{5m-5}) h(\bar \alpha,\bu,\bv,\bw)&= 0 \in (\ell_\nu^1)^{n-1},
\end{align*}
since the product of $p$ trigonometric functions of degree $m-1$ is a trigonometric function of degree $p(m-1)$. For instance, recalling \eqref{eq:g_Fourier_map_automatic_differentiation}, the highest degree terms in $g(\bar \lambda,\bu,\bv,\bw,\omega)$ are of the form $(M_{j} \bu) * \bw_{j}^{3}$ which are convolutions of degree four, and therefore have zero Fourier coefficients for all frequencies $\ell$ such that $|\ell| > 4m-4$. This implies that $F(\bx,\omega)$ has only a finite number of nonzero terms. Hence, we can compute $Y_0$ satisfying \eqref{eq:Y0_radPolyBanach}.

\subsection{\boldmath$Z_0$\unboldmath~bound}

Let $B \bydef I - A A^{\dagger}$, which we denote block-wise by
\[
B =
\begin{pmatrix}
B_{\lambda,\lambda} & B_{\lambda,\alpha} & B_{\lambda,u} & B_{\lambda,v} & B_{\lambda,w}
\\
B_{\alpha,\lambda} & B_{\alpha,\alpha} & B_{\alpha,u} & B_{\alpha,v} & B_{\alpha,w}
\\
B_{u,\lambda} & B_{u,\alpha} & B_{u,u} & B_{u,v} & B_{u,w}
\\
B_{v,\lambda} & B_{v,\alpha} & B_{v,u} & B_{v,v} & B_{v,w}
\\
B_{w,\lambda} & B_{w,\alpha} & B_{w,u} & B_{w,v} & B_{w,w}
\end{pmatrix}.
\]
Note that by definition of the diagonal tails of $A$ and $\dagA$, the tails of $B$ vanish, that is all $B_{\delta,\tilde \delta}$ ($\delta,\tilde \in \{u,v,w\}$) are represented by $2m-1 \times 2m-1$ matrices. We can compute the bound
\[
{\tiny
Z_0^{(\delta)} \bydef
\left\{ 
\begin{array}{ll}
\displaystyle
 \sum_{\tilde \delta \in \{ \lambda_1,\lambda_2,\lambda_3, \atop \alpha_1,\dots,\alpha_{n-1}\}} 
 \left| B_{\delta,\tilde \delta} \right| 
 +  
  \sum_{\tilde \delta \in \{ u_1,u_2,u_3,v_1, \atop v_2,v_3,w_1,\dots,w_{n-1}\}} 
  \max_{|\ell|<m} 
  \frac{\left| \left( B_{\delta,\tilde \delta} \right)_\ell \right|}{\nu^{|\ell|}},
&
\delta \in \{ \lambda_1,\lambda_2,\lambda_3, \atop \alpha_1,\dots,\alpha_{n-1}\}  ,
\\
\displaystyle
\sum_{\tilde \delta \in \{ \lambda_1,\lambda_2,\lambda_3, \atop \alpha_1,\dots,\alpha_{n-1}\}} 
\sum_{|\ell|<m}  \left| \left( B_{\delta,\tilde \delta} \right)_{\ell} \right| \nu^{|\ell|} 
+ 
\sum_{\tilde \delta \in \{ u_1,u_2,u_3,v_1, \atop  v_2,v_3,w_1,\dots,w_{n-1}\}} 
\max_{|s|<m} \frac{1}{\nu^{|s|}} \sum_{|\ell|<m} \left| \left( B_{\delta,\tilde \delta} \right)_{\ell,s} \right| \nu^{|\ell|} ,& 
\delta \in \{ {u_1,u_2,u_3,\atop v_1,v_2,v_3,}\atop w_1,\dots,w_{n-1}\}.
\end{array}
\right.
}
\]
By construction, letting
\begin{equation} \label{eq:Z0_explicit}
Z_0 \bydef
\hspace{-.5cm}
 \max_{
{\delta \in \{ \lambda_1,\lambda_2,\lambda_3,}
\atop {\qquad \alpha_1,\dots,\alpha_{n-1}, \atop {\quad u_1,u_2,u_3, \atop {\quad v_1,v_2,v_3,\atop \qquad w_1,\dots,w_{n-1} \} }}}
} 
\hspace{-.5cm}
\left\{ Z_0^{(\delta)} \right\},
\end{equation}
we get that 
\[
\| I - A A^{\dagger}\|_{B(X)} \le Z_0.
\]

\subsection{\boldmath$Z_1$\unboldmath~bound}

Recall from \eqref{eq:Z1_radPolyBanach} that the $Z_1$ bound satisfy
\[
\| A[D_xF(\bx,\omega) - A^{\dagger} ] \|_{B(X)} \le Z_1.
\]
For the computation of this bound, it is convenient to define, given any $h \in B_1(0) \in X$
\begin{equation} \label{eq:small_z(h)}
z = z(h) \bydef [D_xF(\bx,\omega) - A^{\dagger} ]h.
\end{equation}
Denote 
\begin{align*}
h &= (h_\lambda,h_\alpha,h_u,h_v,h_w) \in \C^3 \times \C^{n-1} \times (\ell_\nu^1)^3 \times (\ell_\nu^1)^3 \times (\ell_\nu^1)^{n-1},
\\
z &= (z_\lambda,z_\alpha,z_u,z_v,z_w) \in \C^3 \times \C^{n-1} \times (\tilde \ell_\nu^1)^3 \times (\tilde \ell_\nu^1)^3 \times (\tilde \ell_\nu^1)^{n-1}.
\end{align*}

The construction of $Z_1$ hence requires computing an upper bound for $\|A z\|_X$ for all $h \in B_1(0) \in X$. 
This is done by splitting $Az$ as 
\begin{align}
\label{eq:Az_splitting}
Az &= \inc \Pi^{(m)} A z + (I-\inc \Pi^{(m)} )A z 
\\
&= \inc A^{(m)} z^{(m)} + 
\begin{pmatrix} 0 \\ 0 \\ 
(I - \iota_3^m \pi_3^m) D^{-1} z_u
\\
\frac{1}{\omega^2} (I - \iota_3^m \pi_3^m) D^{-1} z_v
\\
(I - \iota_{n-1}^m \pi_{n-1}^m) D^{-1} z_w
\end{pmatrix}
\nonumber
\end{align}
and by handling each term separately. 

\begin{remark}
We choose the Galerkin projection number $m$ greater than the number $m_1$ of nonzero Fourier coefficients of the previous orbit $(\tu_1,\tu_2,\tu_3)$.  Then $z_\lambda =0 \in \C^3$. This is because the phase conditions $\eta(u)$ defined in \eqref{eq:eta} only depend on the modes of the finite dimensional approximation and therefore $A^\dagger$ contains all contribution from $D\eta(\bu) h$.
\end{remark}

As $\Pi^{(m)} A z = A^{(m)} z^{(m)}$, we compute a uniform component-wise upper bound
\[
\hat z^{(m)} = (0,\hat z_\alpha,\hat z^{(m)}_u,\hat z^{(m)}_v,\hat z^{(m)}_w) \in \R_+^{2m(n + 5) - 3}
\]
for the complex modulus of each component of
\[
\Pi^{(m)} z  = z^{(m)} = (0,z_\alpha,z^{(m)}_u,z^{(m)}_v,z^{(m)}_w) \in \C^{2m(n + 5)-3}.
\]
%
%
%
The computation of the bounds $\hat z_\alpha$, $\hat z^{(m)}_u$, $\hat z^{(m)}_v$ and $\hat z^{(m)}_w$ is done in Sections~\ref{sec:hzalpha},~\ref{sec:hzu},~\ref{sec:hzv} and \ref{sec:hzw}, respectively. Using these uniform bounds (i.e. for all $h \in B_1(0)$), let 
\begin{equation} \label{eq:xi_uniform_bounds}
\xi^{(m)}= \left( 
\xi_\lambda^{(m)}, \xi_\alpha^{(m)}, \xi_u^{(m)}, \xi_v^{(m)}, \xi_w^{(m)}
\right)  \bydef |A^{(m)}| \hat z^{(m)}   \in \R_+^{2m(n + 5) - 3},
\end{equation}
where the entries of the matrix $|A^{(m)}|$ are the component-wise complex magnitudes of the entries of $A^{(m)}$.
By construction, the bound $\xi^{(m)}$ of \eqref{eq:xi_uniform_bounds} provides a uniform component-wise upper bound for the first term $\inc \Pi^{(m)} A z$ of the splitting \eqref{eq:Az_splitting} of $Az$. To handle the second term $(I-\inc \Pi^{(m)} )A z$ of \eqref{eq:Az_splitting}, we compute the uniform (i.e. for all $h \in B_1(0)$) tail bounds $(\delta_u)_p$, $(\delta_v)_p$ (for $p=1,2,3$) and $(\delta_w)_j$ (for $j=1,\dots,n-1$) satisfying
\begin{align*}
\sum_{|\ell| \ge m} \left| \frac{1}{i \ell} ((z_u)_p)_\ell \right| \nu^{|\ell|} & \le (\delta_u)_p,\quad p=1,2,3
\\
\sum_{|\ell| \ge m} \left| \frac{1}{i \ell \omega^2} ((z_v)_p)_\ell \right| \nu^{|\ell|} & \le (\delta_v)_p,\quad p=1,2,3
\\
\sum_{|\ell| \ge m} \left| \frac{1}{i \ell} ((z_w)_j)_\ell \right| \nu^{|\ell|} & \le (\delta_w)_j,\quad j=1,\dots,n-1.
\end{align*}
The computation of the bounds $\delta_u$, $\delta_v$ and $\delta_w$ is presented in Sections~\ref{sec:hzu}, \ref{sec:hzv} and \ref{sec:hzw}, respectively. Combining the above bounds, we get that
\begin{align} \label{eq:Z1_explicit}
\|A z\|_X &\le Z_1 \bydef \max \left\{ |\xi_\lambda^{(m)}|_\infty, |\xi_\alpha^{(m)}|_\infty, \max_{p=1,2,3} \left( \| \iota^m(\xi_u^{(m)})_p \|_{\nu} + (\delta_u)_p \right), \right. \\
& \hspace{2.5cm} \max_{p=1,2,3} \left( \| \iota^m(\xi_v^{(m)})_p \|_{\nu} + (\delta_v)_p \right), \left. 
\max_{j=1,\dots,n-1} \left( \| \iota^m(\xi_w^{(m)})_j \|_{\nu} + (\delta_w)_j \right)
\right\}.
\nonumber
\end{align}

\subsubsection{Computation of the bound \boldmath$\hat z_\alpha$\unboldmath} \label{sec:hzalpha}

Recalling \eqref{eq:small_z(h)} and the definition of $\dagA$ in \eqref{eq:A_dagger}, one can verify that for any $j=1,\dots,n-1$,
\begin{align*}
(z_{\alpha})_j &= 
2 \left(  \sum_{\ell\in\mathbb{Z}} (\bw_{j})_{\ell}\right) \left[
\sum_{p=1}^{3} \left(  \sum_{\ell\in\mathbb{Z}} (M_{j \ell}\bu_{\ell})_{p}
\right) ^{2} \right] \left(  \sum_{|\ell| \ge m } ((h_u)_{j})_{\ell}\right) \\
& \quad + 2 \left(  \sum_{\ell\in\mathbb{Z}} (\bw_{j})_{\ell}\right) ^{2} \left[
\sum_{p=1}^{3} \left(  \sum_{\ell\in\mathbb{Z}} (M_{j \ell} \bu_{\ell})_{p}
\right) \left(  \sum_{|\ell| \ge m} (M_{j \ell} (h_u)_{\ell})_{p}
\right) \right].
\end{align*}
Straightforward calculations (e.g. using Lemma 2.1 in \cite{LeMi16}) involving bounding linear functionals on $\ell_\nu^1$ and using that $(h_u)_p \in B_1(0) \subset \ell_\nu^1$ for $p=1,2,3$ yield that 
\[
\left|  \sum_{|\ell| \ge m } ((h_u)_{j})_{\ell} \right| \le \frac{1}{\nu^m}, 
\quad 
\left|  \sum_{|\ell| \ge m} (M_{j \ell} (h_u)_{\ell})_{p} \right| \le \frac{i_p}{\nu^m}, 
\quad 
i_p \bydef \begin{cases} 3, & p=1,2 \\ 2, & p=3. \end{cases}
\]
We therefore get the component-wise bound  (given $j=1,\dots,n-1$)
\begin{align} \label{eq:hat_z_alpha}
|(z_{\alpha})_j|  &\le (\hat z_{\alpha})_j 
\\
& \bydef
\frac{2}{\nu^m} \left[
\left( \sum_{\ell \in \Z } (\bw_{j})_{\ell} \right)
\sum_{p=1}^{3} \left( \sum_{\ell \in \Z} (M_{j \ell} \bu_{\ell} )_{p} \right)^{2} 
+ \left( \sum_{\ell \in \Z} (\bw_{j})_{\ell} \right)^2 
\sum_{p=1}^{3} \left( \sum_{\ell \in \Z} (M_{j \ell} \bu_{\ell})_{p} \right) i_p \right].
\nonumber
\end{align}

\subsubsection{Computation of the bounds \boldmath$\hat z^{(m)}_u$\unboldmath~and~\boldmath$\delta_u$\unboldmath} \label{sec:hzu}

From \eqref{eq:small_z(h)} and \eqref{eq:A_dagger}, on can verify that for each $p=1,2,3$, 
\[
((z_u)_p)_\ell = 
\begin{cases}
0, & |\ell| <m \\ -((h_u)_p)_\ell, & |\ell| \ge m.
\end{cases}
\]
Hence, since $z_u$ only has a tail and since the blocks $A_{\lambda,u}$, $A_{\alpha,u}$, $A_{v,u}$ and $A_{w,u}$ only acts on the finite part, then $A_{\delta,u} z_u = 0$ for $\delta = \lambda,\alpha,v,w$ and for $p=1,2,3$
\[
((A_{u,u} z_u)_p)_\ell  = -\frac{1}{i\ell}((h_u)_p)_\ell.
\]

Now, 
\[
\sum_{|\ell| \ge m} \left| -\frac{1}{i\ell}((h_u)_p)_\ell \right| \nu^{|\ell|} \le \frac{1}{m}
\sum_{|\ell| \ge m} \left| ((h_u)_p)_\ell \right| \nu^{|\ell|} \le \frac{1}{m} \| (h_u)_p\|_\nu \le \frac{1}{m}.
\]

We can then set 
\begin{align}
\label{eq:hzu}
\hat z^{(m)}_u & \bydef 0 \in \R^{3(2m-1)} 
\\
\label{eq:delta_u}
(\delta_u)_p & \bydef \frac{1}{m}, \quad p=1,2,3.
\end{align}

\subsubsection{Computation of the bound \boldmath$\hat z^{(m)}_v$\unboldmath~and~\boldmath$\delta_v$\unboldmath} \label{sec:hzv}

The following technical lemma (which is a slight modification of Corollary 3 in \cite{LeMi16}) is the key to the truncation error analysis of $\hat z_v^{(m)}$ and $\hat z_w^{(m)}$.
\begin{lemma}\label{lem:Z1_estimate_ps}
Fix a truncation Fourier mode to be $m$. Given $h \in \ell_\nu^1$, set 
\[
h^{(I)} \bydef (I- \iota^m \pi^m ) h = (\ldots, h_{-m-1}, h_{-m}, 0, \ldots, 0, h_{m}, h_{m+1}, \ldots) \in \ell_\nu^1.
\]
Let $N \in \N$ and let $\bar \alpha = (\ldots, 0, 0, \bar \alpha_{-N}, \ldots, \bar \alpha_{N}, 0, 0, \ldots) \in \ell_\nu^1$. Then, for all $h \in \ell_\nu^1$ such that $\|h\|_{\nu} \le 1$, and for $|\ell|<m$,
\begin{equation} \label{eq:bounds_lkac}
\left| (\bar \alpha * h^{(I)})_\ell \right| \le 
\Psi_\ell(\bar \alpha) \bydef
\max \left(
\max_{\ell-N \le s \le -m}  \frac{|\bar \alpha_{\ell- s}|}{\nu^{|s|}} ,
\max_{m \le s \le \ell+N} \frac{| \bar \alpha_{\ell - s}|}{\nu^{|s|}} 
\right).
\end{equation}
\end{lemma}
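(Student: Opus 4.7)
My plan is to expand the convolution directly and exploit the band structure imposed by the supports of $\bar\alpha$ and $h^{(I)}$. Writing $k_2 = s$, $k_1 = \ell - s$, I get
\[
(\bar\alpha * h^{(I)})_\ell = \sum_{s \in \Z} \bar\alpha_{\ell - s}\, h^{(I)}_s.
\]
First I would read off which indices $s$ can actually contribute: since $\bar\alpha$ is supported in $\{-N,\dots,N\}$ the factor $\bar\alpha_{\ell-s}$ forces $\ell - N \le s \le \ell + N$, and since $h^{(I)}$ kills the modes $|s| < m$ the factor $h^{(I)}_s$ forces $|s| \ge m$. For $|\ell| < m$ these two windows decompose into exactly two disjoint pieces, $m \le s \le \ell + N$ and $\ell - N \le s \le -m$, matching the two maxima in the definition of $\Psi_\ell(\bar\alpha)$.

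Next I would perform the standard $\ell^1_\nu$ trick of inserting $\nu^{|s|}/\nu^{|s|}$ in order to move the weight onto $h$. On each of the two admissible windows,
\[
|\bar\alpha_{\ell-s}|\,|h_s| = \frac{|\bar\alpha_{\ell-s}|}{\nu^{|s|}}\,|h_s|\,\nu^{|s|},
\]
so pulling out the maximum of $|\bar\alpha_{\ell-s}|/\nu^{|s|}$ over each window and summing yields
\[
|(\bar\alpha * h^{(I)})_\ell| \;\le\; \Psi_\ell(\bar\alpha) \sum_{|s| \ge m} |h_s|\,\nu^{|s|} \;=\; \Psi_\ell(\bar\alpha)\,\|h^{(I)}\|_\nu.
\]
Finally, since $\|h^{(I)}\|_\nu \le \|h\|_\nu \le 1$, the claimed bound \eqref{eq:bounds_lkac} follows.

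The argument is almost entirely bookkeeping; the only step requiring any care is verifying that the two index windows $\{m \le s \le \ell + N\}$ and $\{\ell - N \le s \le -m\}$ are genuinely disjoint and together exhaust the set of contributing $s$ when $|\ell| < m$. This is where the hypothesis $|\ell| < m$ is used in an essential way, because it guarantees that neither window collapses into the other and that no mode with $|s| < m$ sneaks in. Once that verification is in hand, the rest is a one-line application of the $\ell^1_\nu$ Hölder-type inequality together with the normalization $\|h\|_\nu \le 1$.
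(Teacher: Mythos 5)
Your proof is correct, and since the paper itself gives no proof of Lemma \ref{lem:Z1_estimate_ps} (it only cites a slight modification of Corollary 3 of \cite{LeMi16}), there is no internal argument to compare against. Your route is the standard one for this type of estimate: expand $(\bar\alpha * h^{(I)})_\ell = \sum_s \bar\alpha_{\ell-s} h^{(I)}_s$, intersect the support band $\ell - N \le s \le \ell + N$ coming from $\bar\alpha$ with the tail $|s| \ge m$ coming from $h^{(I)}$, note that for $|\ell|<m$ this intersection is contained in the two disjoint intervals $[\ell-N,-m]$ and $[m,\ell+N]$, insert $\nu^{|s|}/\nu^{|s|}$, pull out $\sup_s |\bar\alpha_{\ell-s}|/\nu^{|s|}$ over those admissible $s$ (which is exactly $\Psi_\ell(\bar\alpha)$), and bound the remaining sum by $\|h^{(I)}\|_\nu \le \|h\|_\nu \le 1$. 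This is the weighted H\"older / $\ell^1_\nu$--$\ell^\infty_{1/\nu}$ duality argument that underlies the cited corollary, so in substance you have reproduced the intended proof. Your remark that $|\ell|<m$ is exactly what keeps the two index windows disjoint and excludes any $|s|<m$ is the right place to flag where the hypothesis is used. One cosmetic point: when $N < m - |\ell|$ both windows are empty, the convolution coefficient vanishes, and $\Psi_\ell(\bar\alpha)$ is a maximum over empty sets; the bound still holds with the usual convention that such a maximum is $0$, but it is worth a one-line remark if you want the statement to be airtight in that degenerate case.
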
 

Now, from \eqref{eq:small_z(h)} and \eqref{eq:A_dagger}, on can verify that for each $p=1,2,3$, 
\[
((z_v)_p)_\ell = 
\begin{cases}
\left( 
\displaystyle
\sum_{j=1}^{n-1} (M_j h_u^{(I)})_p*\bw_j^3 + 3 (M_j \bu )_p*\bw_j^2*h_{w_j}^{(I)}
 \right)_\ell, & |\ell| <m 
\\ 
\left( \left(
2 \omega \sqrt{s_1} \bar J h_v - s_1 \bar I h_u  + (h_\lambda)_1 \bar J \bu
+ \bar \lambda_1 \bar J h_u + \bar \lambda_2 h_v + (h_\lambda)_2 \bv \right)_p \right)_\ell  &
\\
+ \left(
\displaystyle
\sum_{j=1}^{n-1} (M_j h_u)_p*\bw_j^3 + 3 (M_j \bu )_p*\bw_j^2*h_{w_j}
\right)_\ell, & |\ell| \ge m.
\end{cases}
\]
Using Lemma~\ref{lem:Z1_estimate_ps}, we obtain that for $|\ell| <m$ and $p=1,2,3$,
\begin{equation} \label{eq:hzv}
\left| ((z^{(m)}_v)_p)_\ell \right| \le \left( (\hat z^{(m)}_v )_p\right)_\ell \bydef \sum_{j=1}^{n-1} i_p \Psi_\ell(\bw_j^3) + 3 \Psi_\ell( (M_j \bu)_p*\bw_j^2),
\end{equation}
which provides a component-wise definition of the vector $\hat z_v^{(m)} \in \R_+^{3(2m-1)}$. 
Finally, one can verify using the fact that $\ell_\nu^1$ is a Banach algebra, that
\begin{align}
\nonumber
\sum_{|\ell| \ge m} \left| \frac{1}{i \ell \omega^2} ((z_v)_1)_\ell \right| \nu^{|\ell|}  \le
(\delta_v)_1 & \bydef 
\frac{1}{m \omega^2} \big( 
2\omega \sqrt{s_1} + s_1 + \|\bu_2\|_\nu + |\bar \lambda_1|  + |\bar \lambda_2| + \|\bv_1\|_\nu
\\
\label{eq:delta_v1}
& \hspace{1.5cm} + 3 \sum_{j=1}^{n-1} \|\bw_j\|_\nu^3 + \| (M_j \bu)_1\|_\nu \|\bw_j\|_\nu^2 \big)
\\
\nonumber
\sum_{|\ell| \ge m} \left| \frac{1}{i \ell \omega^2} ((z_v)_2)_\ell \right| \nu^{|\ell|}  \le
(\delta_v)_2 & \bydef 
\frac{1}{m \omega^2} \big( 
2\omega \sqrt{s_1} + s_1 + \|\bu_1\|_\nu + |\bar \lambda_1|  + |\bar \lambda_2| + \|\bv_2\|_\nu
\\
\label{eq:delta_v2}
& \hspace{1.5cm} + 3 \sum_{j=1}^{n-1} \|\bw_j\|_\nu^3 + \| (M_j \bu)_2\|_\nu \|\bw_j\|_\nu^2 \big)
\\
\label{eq:delta_v3}
\sum_{|\ell| \ge m} \left| \frac{1}{i \ell \omega^2} ((z_v)_3)_\ell \right| \nu^{|\ell|}  \le
(\delta_v)_3 & \bydef 
\frac{1}{m \omega^2} \big( 
 |\bar \lambda_2| + \|\bv_3\|_\nu
 + \sum_{j=1}^{n-1} 2 \| \bw_j\|_\nu^3 + 3 \| (M_j \bu)_3\|_\nu \|\bw_j\|_\nu^2 \big)
 \end{align}

\subsubsection{Computation of the bound \boldmath$\hat z^{(m)}_w$\unboldmath~and~\boldmath$\delta_w$\unboldmath} \label{sec:hzw}

From \eqref{eq:small_z(h)} and \eqref{eq:A_dagger}, on can verify that for each $j=1,\dots,n-1$,
\[
((z_w)_j)_\ell = 
{\tiny
\begin{cases}
\left( 3 \bar \alpha_j \bw_j^2 * h^{(I)}_{w_j} \right)_\ell  & \\
\displaystyle
+ \left(
\sum_{p=1}^3 3 \bw_j^2 * h_{w_j}^{(I)} * (M_j \bu )_p * (M_j \bv)_p
+ \bw_j^3 * \left( (M_j h_u^{(I)} )_p * (M_j \bv)_p + (M_j \bu )_p * (M_j h_v^{(I)})_p \right) 
\right)_\ell, & |\ell| <m 
\\ 
\left(
h_{\alpha_j} \bw_j^3 + 3 \bar \alpha_j \bw_j^2 * h_{w_j}
\right)_\ell  &
\\
+ \left( \displaystyle
\sum_{p=1}^3 3 \bw_j^2 * h_{w_j} * (M_j \bu )_p * (M_j \bv)_p
+ \bw_j^3 * \left( (M_j h_u )_p * (M_j \bv)_p + (M_j \bu )_p * (M_j h_v)_p \right) 
 \right)_\ell, & |\ell| \ge m.
\end{cases}
}
\]

Using Lemma~\ref{lem:Z1_estimate_ps}, we obtain that for $|\ell| <m$ and $j=1,\dots,n-1$,
\begin{align} 
\label{eq:hzw}
\left| ((z^{(m)}_w)_j)_\ell \right| \le \left( (\hat z^{(m)}_w )_p\right)_\ell 
& \bydef 3 |\bar \alpha_j| \Psi_\ell(\bw_j^2) +  \sum_{p=1}^3 3 \Psi_\ell(\bw_j^2*(M_j \bu )_p*(M_j \bv )_p) 
\\
& \qquad + \sum_{p=1}^3 i_p \Psi_\ell( \bw_j^3*(M_j \bv)_p) + i_p \Psi_\ell( \bw_j^3*(M_j \bu)_p).
\nonumber
\end{align}
Moreover, for $j=1,\dots,n-1$,
\begin{align}
\nonumber
\sum_{|\ell| \ge m} \left| \frac{1}{i \ell} (((z_w)_j)_\ell \right| \nu^{|\ell|} \le
(\delta_w)_j &
\hspace{-.1cm}
\bydef 
\hspace{-.1cm}
\frac{1}{m} \hspace{-.1cm} \left( 
\| \bw_j\|_\nu^3 + 3 |\bar \alpha_j| \|\bw_j\|_\nu^2 + \sum_{p=1}^{3} 3 \|\bw_j\|_\nu^2 \| (M_j \bu)_p\|_\nu \| (M_j \bv)_p \|_\nu
\right.
\\
\label{eq:delta_wj}
& \hspace{1.3cm}
\left. 
+ \sum_{p=1}^{3} i_p \|\bw_j\|_\nu^3 (\| (M_j \bu)_p\|_\nu+\| (M_j \bv)_p\|_\nu)
 \right).
\end{align}

Combining \eqref{eq:hat_z_alpha}, \eqref{eq:hzu}, \eqref{eq:hzv} and \eqref{eq:hzw}, we define the uniform bound $\hat z^{(m)}$ which is then used to compute $\xi^{(m)}$ in \eqref{eq:xi_uniform_bounds}. Moreover, combining \eqref{eq:delta_u}, \eqref{eq:delta_v1}, \eqref{eq:delta_v2}, \eqref{eq:delta_v3} and \eqref{eq:delta_wj} provides the explicit bounds $\delta_u$, $\delta_v$ and $\delta_w$. All of these uniform bounds combined are finally used to compute the bound $Z_1$ in \eqref{eq:Z1_explicit} which by construction satisfy \eqref{eq:Z1_radPolyBanach}. 

\subsection{\boldmath$Z_2$\unboldmath~bound}

Recall that we look for a bound $Z_2$ satisfying \eqref{eq:Z2_radPolyBanach}. Consider $Z_2$ satisfying 
\[
\|A\|_{B(X)} \sup_{{\xi \in B_r(\bx) \atop h^{(1)},h^{(2)} \in B_1(0)}} 
\| D^2_xF(\xi,\omega)(h^{(1)},h^{(2)})\|_{X} \le Z_2.
\]
Then, for any $b \in B_r(0)$, applying the Mean Value Inequality yields
\begin{align*}
\| A [D_x F(\bx+b,\omega) - D_xF(\bx,\omega)]\|_{B(X)} 
& \le r \sup_{{\xi \in B_r(\bx) \atop h^{(1)},h^{(2)} \in B_1(0)}} 
\| A D^2_xF(\xi,\omega)(h^{(1)},h^{(2)})\|_{X}  \le Z_2 r.
\end{align*}
Given $\xi \in B_r(\bx)$ and $h^{(1)},h^{(2)} \in B_1(0)$, we aim at bounding $\| D^2_xF(\xi,\omega)(h^{(1)},h^{(2)})\|_{X}$. Let
\[
z \bydef D^2_xF(\xi,\omega)(h^{(1)},h^{(2)}),
\]
which we denote by $z=(z_\lambda,z_\alpha,z_u,z_v,z_w) = (0,z_\alpha,0,z_v,z_w)$, where $z_\lambda$ and $z_u$ are both zero since $\eta$ and $f$ are linear. Denote
\begin{align*}
h^{(i)} &= \left( h_\lambda^{(i)} , h_\alpha^{(i)} , h_u^{(i)} , h_v^{(i)} , h_w^{(i)} \right), \quad i=1,2
\\
\xi &= \left( \xi_\lambda , \xi_\alpha , \xi_u , \xi_v , \xi_w \right).
\end{align*}
Then, for $j=1,\dots,n-1$,
\begin{align*}
(z_\alpha)_j & = 2 \left[ \sum_{\ell \in \Z}  \left( h_{w_j}^{(2)} \right)_\ell \right] \left[ \sum_{\ell \in \Z} \left( h_{w_j}^{(1)} \right)_\ell \right] \sum_{p=1}^3 \left( \sum_{\ell \in \Z} (M_{j \ell} (\xi_u)_\ell )_p \right)^2 \\
& \qquad + 4 \left[ \sum_{\ell \in \Z}  \left( \xi_{w_j} \right)_\ell \right] \left[ \sum_{\ell \in \Z} \left( h_{w_j}^{(1)} \right)_\ell \right] \sum_{p=1}^3 \left( \sum_{\ell \in \Z} (M_{j \ell} (\xi_u)_\ell )_p \right) \left( \sum_{\ell \in \Z} (M_{j \ell} (h_u^{(2)})_\ell )_p \right) \\
& \qquad + 4 \left[ \sum_{\ell \in \Z}  \left( \xi_{w_j} \right)_\ell \right] \left[ \sum_{\ell \in \Z} \left( h_{w_j}^{(2)} \right)_\ell \right] \sum_{p=1}^3 \left( \sum_{\ell \in \Z} (M_{j \ell} (\xi_u)_\ell )_p \right) \left( \sum_{\ell \in \Z} (M_{j \ell} (h_u^{(1)})_\ell )_p \right) \\
& \qquad + 2 \left( \sum_{\ell \in \Z}  \left( \xi_{w_j} \right)_\ell \right)^2 
\sum_{p=1}^3 \left( \sum_{\ell \in \Z} (M_{j \ell} (h_u^{(2)})_\ell )_p \right) \left( \sum_{\ell \in \Z} (M_{j \ell} (h_u^{(1)})_\ell )_p \right).
\end{align*}
Consider $r_*>0$ such that $r \le r_*$.
For $j=1,\dots,n-1$ and $i=1,2$,
\begin{align*}
\left| \sum_{\ell \in \Z}  \left( h_{w_j}^{(i)} \right)_\ell \right| & \le  \sum_{\ell \in \Z}  \left| \left( h_{w_j}^{(i)} \right)_\ell \right| \nu^{|\ell|} = \| h_{w_j}^{(i)} \|_\nu \le 1
\\
\left| \sum_{\ell \in \Z}  \left( \xi_{w_j} \right)_\ell \right| & \le \|  \xi_{w_j} \|_\nu
\le \|  \bw_j \|_\nu  + r \le \hat w_j \bydef 
\|  \bw_j \|_\nu + r_* 
\\
\left| \sum_{\ell \in \Z} (M_{j \ell} (\xi_u)_\ell )_p \right| & \le 
\hat \delta_p(u) \bydef
\begin{cases}
2 \| \bu_1\|_\nu + \| \bu_2\|_\nu + 3r_* , & p=1 
\\
\| \bu_1\|_\nu + 2 \| \bu_2\|_\nu  + 3r_* , & p=2
\\
2 \| \bu_3 \|_\nu  + 2r_* , & p=3
\end{cases}
\\
\left| \sum_{\ell \in \Z} (M_{j \ell} (h_u^{(i)})_\ell )_p \right| & \le i_p.
\end{align*}
Then, for $j=1,\dots,n-1$,
\begin{equation} \label{eq:hat_z_alpha_j}
\left| (z_\alpha)_j \right| \le (\hat z_\alpha)_j \bydef 2 \sum_{p=1}^3 \hat \delta_p(u)^2 + 4 \hat w_j i_p \hat \delta_p(u) + \hat w_j^2 i_p^2.
\end{equation}
One verifies that 
\begin{align*}
z_v &= h_{\lambda_1}^{(1)} \bar J h_{u}^{(2)} + h_{\lambda_1}^{(2)} \bar J h_{u}^{(1)} +
h_{\lambda_2}^{(1)} h_{v}^{(2)} + h_{\lambda_2}^{(2)} h_{v}^{(1)} 
\\
& \quad + 3\sum_{j=1}^{n-1} (M_j h_u^{(1)})*(\xi_{w_j})^2*h_{w_j}^{(2)} + (M_j h_u^{(2)})*(\xi_{w_j})^2*h_{w_j}^{(1)} 
+ 2 (M_j \xi_u)*\xi_{w_j}*h_{w_j}^{(2)}*h_{w_j}^{(1)},
\end{align*}
and hence using the Banach algebra structure of $\ell_\nu^1$, we get that (for $p=1,2,3$)
\begin{equation} \label{eq:hat_v_p}
\| (z_v)_p \|_\nu \le (\hat z_v)_p \bydef 
4 + 6 \sum_{j=1}^{n-1} i_p \hat w_j^2 + \hat \delta_p(u) \hat w_j.
\end{equation}
For $j=1,\dots,n-1$,
\begin{align*}
z_{w_j} & = 6 \xi_{w_j} * h_{w_j}^{(2)}* h_{w_j}^{(1)} * \sum_{p=1}^3 (M_j \xi_u)_p * (M_j \xi_v)_p 
\\
& \quad  
+ 3 (\xi_{w_j})^2 * h_{w_j}^{(1)} * \sum_{p=1}^3 \left( (M_j h_u^{(2)})_p * (M_j \xi_v)_p + (M_j \xi_u)_p * (M_j h_v^{(2)})_p  \right)
\\
& \quad  
+ 3 (\xi_{w_j})^2 * h_{w_j}^{(2)} * \sum_{p=1}^3 \left( (M_j h_u^{(1)})_p * (M_j \xi_v)_p + (M_j \xi_u)_p * (M_j h_v^{(1)})_p  \right)
\\
& \quad  
+ (\xi_{w_j})^3 * \sum_{p=1}^3 \left( (M_j h_u^{(1)})_p * (M_j h_v^{(2)})_p + (M_j h_u^{(2)})_p * (M_j h_v^{(1)})_p  \right) 
\\
& \quad
+ 3 h_{\alpha_j}^{(1)} (\xi_{w_j})^2 * h_{w_j}^{(2)} + 3 h_{\alpha_j}^{(2)} (\xi_{w_j})^2 * h_{w_j}^{(1)} 
+ 6 \xi_{\alpha_j} \xi_{w_j} * h_{w_j}^{(2)}* h_{w_j}^{(1)},
\end{align*}
and hence,
\begin{align}
\nonumber
\| z_{w_j} \|_\nu \le \hat z_{w_j} &
\bydef 2 \hat w_j  \sum_{p=1}^3 \left( 3\hat \delta_p(u) \hat \delta_p(v) + 3 \hat w_j i_p ( \hat \delta_p(u) + \hat \delta_p(v) ) + \hat w_j i_p^2 \right)
\\
& \quad + 6 \hat w_j (\hat w_j + |\bar \alpha_j| + r_*).
\label{eq:hat_z_w_j}
\end{align}

Combining \eqref{eq:hat_z_alpha_j}, \eqref{eq:hat_v_p} and \eqref{eq:hat_z_w_j}, set
\begin{equation} \label{eq:Z2_explicit}
Z_2 \bydef \|A\|_{B(X)} \max_{j=1,\dots,n-1 \atop p=1,2,3} \{ (\hat z_\alpha)_j , (\hat z_v)_p, \hat z_{w_j} \}
\end{equation}
and therefore, for all $b\in B_r(0)$,
\[
\| A [D_xF(\bx+b,\omega) - D_xF(\bx,\omega)]\|_{B(X)} \le Z_2 r. 
\]

\section{Results} \label{sec:results}

In this section, we present several computer-assisted proofs of existence of spatial torus-knot choreographies. First fix the number of bodies $n$, a prescribed symmetry \eqref{Sy} (determined by the integer $k$), a resonance $(p,q)$, the frequency $\omega$ given in \eqref{di}, and a Galerkin projection number $m$. Then compute a {\em real} numerical approximation $\bx \in X_\real$ of the finite dimensional projection $F^{(m)}$ defined in \eqref{eq:F_Fourier_map_projection}, where $X_\real$ is defined in \eqref{eq:sym_Banach_space}. Define the operators $\dagA$ and $A$ as in Section~\ref{sec:operators_A_dagA}. Since the {\em tail} of the diagonal blocks of the approximate inverse $A$ (which is defined in \eqref{eq:A}) involves the operator $D^{-1}$, we can easily show (using that $\ell_\nu^1$ is a Banach algebra under discrete convolutions) that the hypothesis \eqref{eq:AF:X->X} of Theorem~\ref{thm:radPolyBanach} holds, that is $A F \colon X \times \R \to X$. Having described how to compute the bounds $Y_0$ in Section~\ref{sec:Y0}, $Z_0$ in \eqref{eq:Z0_explicit}, $Z_1$ in \eqref{eq:Z1_explicit} and $Z_2$ in \eqref{eq:Z2_explicit}, we have all the ingredients to compute the radii polynomial defined in \eqref{eq:radii_polynomial}. The proof of existence then reduces to verify rigorously the hypothesis \eqref{eq:p(r0)<0} of Theorem~\ref{thm:radPolyBanach}. This is done with a computer program in {\tt MATLAB} implemented with the interval arithmetic package {\tt INTLAB}, and available at \cite{webpage}. 
All computations are performed with 16 decimal digits' precision.

 Let us present in details the computer-assisted proof resulting in the constructive existence of the torus-knot choreography of Figure~\ref{fig:trefoil}. 

\begin{theorem}  \label{thm:trefoil}
Fix $n=5$ and consider the symmetry \eqref{Sy} with $k=3$. Let $(p,q)=(3,1)$ be the resonance. Let $s_1=\frac{1}{4}\sum_{j=1}^{4}\frac{1}{\sin(j\pi/5)}$ be given by \eqref{s1} and the frequency $\omega=3\sqrt{s_{1}}$ be as in \eqref{di}. Fix the Galerkin projection number $m=25$ and the decay rate parameter $\nu=1.03$. Consider the numerical approximation 
\[
\bu(t)  = \sum_{|\ell|<25} \left( (\bu_1)_{\ell} , (\bu_2)_{\ell} , (\bu_3)_{\ell} \right) e^{i \ell t}, \quad (\bu_j)_{-\ell}  = \left( (\bu_j)_{\ell} \right)^*,
\]
where the real and the imaginary part of the Fourier coefficients $(\bu_j)_{\ell}$ can be found in the Appendix in Table~\ref{table:trefoil_data}. Then there exist sequences $\tu_1,\tu_2,\tu_3 \in \ell_\nu^1$ such that 
\begin{equation} \label{eq:tu_thm}
\tu(t) \bydef \sum_{\ell \in \Z} \left( (\tu_1)_{\ell} , (\tu_2)_{\ell} , (\tu_3)_{\ell} \right) e^{i \ell t}, \quad (\tu_j)_{-\ell}  = \left( (\tu_j)_{\ell} \right)^*
\end{equation}
with
\[
\| \bu_j - \tu_j\|_{C^2}  \le 4.7 \times 10^{-10}, \qquad \text{for each } j=1,2,3,
\]
and such that $\mathcal{G}(\tu,\omega)=0$, with $\mathcal{G}$ defined in \eqref{eq:DDE_formulation}. Then $\{Q_j\}_{j=1}^5$ defined in the inertial frame by
\begin{align} 
\label{eq:tQ5_proof}
 Q_{5}(t) \bydef e^{t\bar{J}/3} \tilde u(t)
,  \quad Q_{j}(t) \bydef  Q_{5}(t+3j\zeta), \quad j=1,2,3,4
\end{align}
%
is a (renormalized) $6\pi $-periodic choreography that is symmetric by  $2\pi/3 $-rotations. Moreover, there exist countably many choreographies with frequencies near $\omega=3\sqrt{s_{1}}$.
\end{theorem}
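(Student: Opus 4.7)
The plan is to apply the radii polynomial approach of Theorem~\ref{thm:radPolyBanach} to the Fourier map $F(\,\cdot\,,\omega)\colon X\to Y$ defined in \eqref{eq:F_Fourier_map_automatic_differentiation} at the specified parameter values $n=5$, $k=3$, $\omega = 3\sqrt{s_1}$, using the numerical approximation $\bx\in X_\real$ reconstructed from the data in Table~\ref{table:trefoil_data}. Concretely, one first extends $\bu$ to a point $\bx=(\bar\lambda,\bar\alpha,\bu,\bv,\bw)\in \mathbb{C}^{2m(n+5)-3}$ by setting $\bv = D\bu$, $(\bw_j)_\ell$ equal to the Fourier coefficients of $1/\|\bu(t)-e^{j\bar J\zeta}\bu(t+jk\zeta)\|$ (obtained via FFT), and $\bar\lambda=0$, $\bar\alpha=0$, then refines $\bx$ by a few Newton iterations on the finite dimensional map $F^{(m)}$ until $F^{(m)}(\bx,\omega)$ is at machine precision. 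We then build the operators $\dagA$ and $A$ as in Section~\ref{sec:operators_A_dagA}, taking $A^{(m)}$ to be a numerically computed inverse of $DF^{(m)}(\bx,\omega)$.

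Next, the four bounds $Y_0$, $Z_0$, $Z_1$, $Z_2$ are evaluated rigorously using \texttt{INTLAB} interval arithmetic, following the explicit formulas in Sections~\ref{sec:Y0} through the end of Section~\ref{sec:bounds}. The evaluation of $Y_0$ is finite because the residual has polynomial nonlinearity of degree at most four and therefore support in $|\ell|\le 4m-4$; the evaluation of $Z_0$ reduces to computing component-wise $\ell_\nu^1$ operator norms of the $2m-1$ blocks of $I-AA^\dagger$; the evaluation of $Z_1$ combines the discrete bounds \eqref{eq:hat_z_alpha},~\eqref{eq:hzv},~\eqref{eq:hzw} with the tail bounds \eqref{eq:delta_u},~\eqref{eq:delta_v1}--\eqref{eq:delta_v3},~\eqref{eq:delta_wj}; the evaluation of $Z_2$ uses the uniform polynomial bounds \eqref{eq:hat_z_alpha_j},~\eqref{eq:hat_v_p},~\eqref{eq:hat_z_w_j} together with an interval-arithmetic upper bound on $\|A\|_{B(X)}$. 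One then forms the radii polynomial \eqref{eq:radii_polynomial} and verifies with interval arithmetic that there is an $r_0>0$ with $p(r_0)<0$ (and $r_0$ chosen consistent with the $r_*$ used in the $Z_2$ bound). Theorem~\ref{thm:radPolyBanach} then produces a unique $\tx\in B_{r_0}(\bx)\subset X$ with $F(\tx,\omega)=0$, Proposition~\ref{prop:real_solutions} upgrades it to $\tx\in X_\real$, and Propositions~\ref{gen} and \ref{prop:alpha=0} imply that the corresponding $\tu$ solves $\mathcal{G}(\tu,\omega)=0$.

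To convert the $\ell_\nu^1$ bound $\|\bu_j-\tu_j\|_\nu\le r_0$ into the claimed $C^2$ bound, one uses the analyticity estimate
\[
\|\bu_j-\tu_j\|_{C^2} \le C_\nu\,\|\bu_j-\tu_j\|_\nu,
\qquad
C_\nu \bydef \max\left\{1,\ \sup_{\ell\in\Z}\frac{|\ell|}{\nu^{|\ell|}},\ \sup_{\ell\in\Z}\frac{\ell^2}{\nu^{|\ell|}}\right\},
\]
which is finite because $\nu = 1.03>1$; this forces $r_0$ to be chosen small enough (roughly $r_0\lesssim 4.7\times 10^{-10}/C_\nu$) when the radii polynomial is tested. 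Proposition~\ref{proposition} together with Corollary~\ref{cor:torusKnots} (applied with $p=3$, $q=1$, so $kq-p = 0$ and $\tilde k = k = 3$) then yields that the $Q_j$ defined in \eqref{eq:tQ5_proof} form a $2\pi p=6\pi$-periodic simple choreography with the $2\pi/p = 2\pi/3$ rotational symmetry, whose path is a $(3,1)$-torus knot provided the axial branch does not cross the $z$-axis (which is verified a posteriori from $\bu$ together with the rigorous enclosure $r_0$). Finally, the existence of countably many nearby choreographies follows from the implicit-function-theorem continuation argument immediately after Corollary~\ref{cor:nonDegen}: the non-degeneracy of $ADF(\tx,\omega)$ supplied by that corollary produces a smooth branch $\omega\mapsto x(\omega)$ of zeros of $F$ in an open interval $(\omega-\epsilon,\omega+\epsilon)$, and by density of the rationals of the form $\sqrt{s_1}\,p/q$ with $kq-p\in 5\Z$ in that interval we obtain infinitely many choreographies close to the trefoil.

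The main obstacle I expect is the interplay between the Galerkin size $m=25$, the decay parameter $\nu = 1.03$, and the tightness of the bound $Y_0$: the eventual bound on $r_0$ must be small enough that $C_\nu r_0 \le 4.7\times 10^{-10}$, and at the same time the matrix $A^{(m)}$ must be computed accurately enough that $Z_0$ stays well under one. In practice this makes the $Z_1$ computation the delicate step, since it is the only bound in which every Fourier coefficient of $\bu$, $\bv$, $\bw$ is involved through the convolution weights $\Psi_\ell$ of Lemma~\ref{lem:Z1_estimate_ps}; loss of cancellation in interval arithmetic on these convolutions is the most likely source of failure, and will dictate the required working precision, the size of $m$, and ultimately whether the radii polynomial admits a negative value in the admissible range.
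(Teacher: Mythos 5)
Your plan matches the paper's proof step by step: build $\bx$ from the tabulated Fourier data, form $\dagA$ and $A$ per Section~\ref{sec:operators_A_dagA}, evaluate the bounds $Y_0$, $Z_0$, $Z_1$, $Z_2$ by interval arithmetic, run Theorem~\ref{thm:radPolyBanach}, upgrade to $X_\real$ via Proposition~\ref{prop:real_solutions}, push back through Propositions~\ref{prop:alpha=0}, \ref{gen}, and \ref{proposition} to obtain a choreography, and finally invoke Corollary~\ref{cor:nonDegen} together with the density of the admissible rationals $\sqrt{s_1}\,p/q$ to get countably many nearby choreographies. Two points of divergence are worth flagging.

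First, the $C^2$ estimate. You correctly observe that passing from the $\ell^1_\nu$ enclosure $\|\tu_j-\bu_j\|_\nu\le r_0$ to a genuine $C^2$ bound on the periodic function requires a factor $C_\nu = \max\{1,\sup_\ell |\ell|/\nu^{|\ell|},\sup_\ell \ell^2/\nu^{|\ell|}\}$, and with $\nu=1.03$ one computes $C_\nu\approx 6\times 10^{2}$, so the honest conclusion from $r_0=4.7\times10^{-10}$ is $\|\tu_j-\bu_j\|_{C^2}\lesssim 3\times 10^{-7}$, not $4.7\times 10^{-10}$. The paper's own proof simply writes $\|\tu_j-\bu_j\|_{C^2}=\|\tu_j-\bu_j\|_\nu$, treating the two norms as identical; for the standard $C^2$ norm this is not an equality, and only the $C^0$ bound $\|u\|_{\infty}\le\|u\|_\nu$ comes for free. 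So your version is the more careful one, but it does not reproduce the numerical constant in the statement unless $r_0$ is driven down by the extra factor $C_\nu$. If you wanted to defend the published constant you would either have to exhibit a much smaller $r_0$ from the radii polynomial, or reinterpret $\|\cdot\|_{C^2}$ as a shorthand for the weighted coefficient norm -- either way, state which convention you adopt, since as written your proof and the theorem's displayed bound are in tension.

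Second, a smaller slip on the topology. Citing Corollary~\ref{cor:torusKnots} with $(p,q)=(3,1)$ and $kq-p=0$ to claim the orbit traces a $(3,1)$-torus knot overlooks the caveat in Remark~\ref{rem:versus}: for this particular orbit $\tilde u$ the red loop winds once about the $z$-axis, so the actual toroidal winding is shifted by one in the second component, giving a $(3,2)$-torus knot (a trefoil). This does not affect the theorem as stated -- it only asserts the renormalized $6\pi$-periodicity and the $2\pi/3$-rotational symmetry, both of which your invocation of Proposition~\ref{proposition} does deliver -- but if you keep the knot-type claim in your write-up it should say $(3,2)$, not $(3,1)$, with the $z$-winding of $\bu$ verified from the data plus the rigorous enclosure.
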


\begin{proof}
First denote by $\bx = (\bar \lambda, \bar \alpha,\bu,\bv,\bw) \in \mathbb{C}^{2m(n + 5) - 3} = \C^{497}$ a numerical approximation of the finite dimensional reduction $F^{(497)}:\C^{497} \to \C^{497}$ defined in \eqref{eq:F_Fourier_map_projection}. The approximation satisfies $\bx \in X_\real$ and can be found in the file {\tt pt\_five\_bodies.mat} available at \cite{webpage}. Note that $\bu \in \C^{3(2m-1)} = \C^{147}$ is recovered from the coefficients in Table~\ref{table:trefoil_data} of the Appendix. Fix $\nu=1.03$. The {\tt MATLAB} computer program {\tt proof\_five\_bodies.m} available at \cite{webpage} computes $Y_0$ as in Section~\ref{sec:Y0}, $Z_0$ in \eqref{eq:Z0_explicit}, $Z_1$ in \eqref{eq:Z1_explicit} and $Z_2$ in \eqref{eq:Z2_explicit}, and verifies rigorously (using {\tt INTLAB}) the hypothesis \eqref{eq:p(r0)<0} of Theorem~\ref{thm:radPolyBanach} with $r_0=4.7 \times 10^{-10}$. Combining Theorem~\ref{thm:radPolyBanach} and Proposition~\ref{prop:real_solutions}, there exists
$\tx = (\tilde \lambda, \tilde \alpha,\tu,\tilde v,\tilde w) \in X_\real$ such that $F(\tx,\omega)=0$ and $\| \tx - \bx\|_X \le r_0 = 4.7 \times 10^{-10}$. Hence, for a given $j \in \{1,2,3\}$,
\begin{align*}
\| \tu_j - \bu_j\|_{C^2} = \| \tu_j - \bu_j \|_\nu \le \| \tx - \bx\|_X \le r_0 = 4.7 \times 10^{-10}.
\end{align*}
By construction of the Fourier map $F$ introduced in Section~\ref{sec:Fourier_map}, the solution $\tx$ yields a $2\pi$-periodic solution $(\tu,\tilde v,\tilde w)$ of the delay equations \eqref{eq:dde1_automatic_differentiation}, \eqref{eq:dde2_automatic_differentiation} and \eqref{eq:dde3_automatic_differentiation}, which also satisfies the extra condition \eqref{eq:extra_initial_conditions}. By Proposition~\ref{prop:alpha=0}, $\tu$ satisfies 
$\mathcal{G}(\tu,\omega)=0$.  The result follows from Proposition~\ref{proposition}. The existence of countably many choreographies with frequencies near $\omega=3\sqrt{s_{1}}$ follows from Corollary~\ref{cor:nonDegen} and the discussion thereafter. 
\end{proof}

In the two left subfigures of Figure~\ref{fig:fiveBod}, we can visualize (in red) the $2\pi$-periodic solution $\tu$ satisfying the reduced delay equations \eqref{eq:reduced_equations}. The initial condition $\tu(0) = (x_0,y_0,z_0)$ of that red orbit can be found in Table~\ref{table:initial_conditions} of the Appendix. This orbit is in the rotating frame. Still in the rotating frame, the position of the other bodies (in blue) can be recovered via the symmetry \eqref{Sy}. In the two right subfigures of Figure~\ref{fig:fiveBod}, we can visualize the position of the bodies $ Q_1, \dots, Q_5$, which are now in the inertial frame. Since 3 and  5  are relative prime, the factor 3 in the equality $
Q_{j}(t)=Q_{n}(t+3j\zeta )$  is just a re-ordering of the numbering of the bodies $j=1,2,3,4$.

\begin{remark}[Resonance numbers versus torus winding] \label{rem:versus}
{\em When $u_{n}(t)$ is a $p:q$ resonant orbit in the axial family with zero 
winding with respect to the $z$-axis, the choreography $Q_{n}(t)$ is a $(p,q)$%
-torus knot.  See Corollary \ref{cor:torusKnots}.
In this case the resonance order $p:q$ in our functional analytic set-up corresponds
exactly to the windings in the definition of a $(p,q)$-torus knot.

If on the other hand $u_{n}(t)$ has winding number one with respect
to the $z$-axis -- as in the case of the orbit $\tilde{u}(t)$ in Figure~\ref{fig:fiveBod} 
(see the far left frame of that figure)--
then the choreography $Q_{n}(t)$ (whose normalized period is $6\pi $) 
has toroidal winding in the second component one more than the 
$q$ value of the resonance.   So even though the choreography illustrated 
in Figure~\ref{fig:fiveBod} is resonant with order $p =3$ and $q=1$, the corresponding 
choreography is a $(3,2)$-torus knot 
after taking into account the non-trivial winding about the $z$-axis.
We conclude that the choreography --illustrated in the center right and far right 
frames of Figure~\ref{fig:fiveBod} --is a $(3,2)$-torus knot: that is, a trefoil knot.
}
\end{remark}

Following exactly the same approach as in Theorem~\ref{thm:trefoil}, we prove the 
existence of several choreographies for $n=4$, $n=7$ and $n=9$ bodies. Results 
from several of our proofs are illustrated in Figures \ref{fig:fourBod_9_14},
\ref{fig:sevBod_11_15}, and \ref{fig:nineBod_13_10} for four, seven, and nine 
bodies respectively.  The computer-assisted proofs are obtained by running the 
codes {\tt proofs\_four\_bodies.m}, {\tt proofs\_seven\_bodies.m} and 
{\tt proofs\_nine\_bodies.m}. The approximations can be found in the data files 
{\tt pts\_four\_bodies.mat}, {\tt pts\_seven\_bodies.mat} and {\tt pts\_nine\_bodies.mat}. 
All files are available at \cite{webpage}. 
In Table~\ref{table:initial_conditions} of the Appendix, the initial conditions $\tu(0) = (x_0,y_0,z_0)$ 
for the $n$-th body of each proven choreography is available. 
In Table~\ref{table:proofs_data} of the Appendix, 
some data for the proofs are given. For each of these proofs, the existence of countably 
many choreographies with near frequencies follows from Corollary~\ref{cor:nonDegen} 
and the discussion thereafter.  The reader interested in reproducing the 
choreographies via numerical integration will find 
at \cite{webpage} the data files containing initial conditions -- in inertial coordinates--
for each of the $4,7$, and $9$ body choreographies illustrated in the Figures.

\begin{figure}[h!]
\centering
\includegraphics[width = 5in]{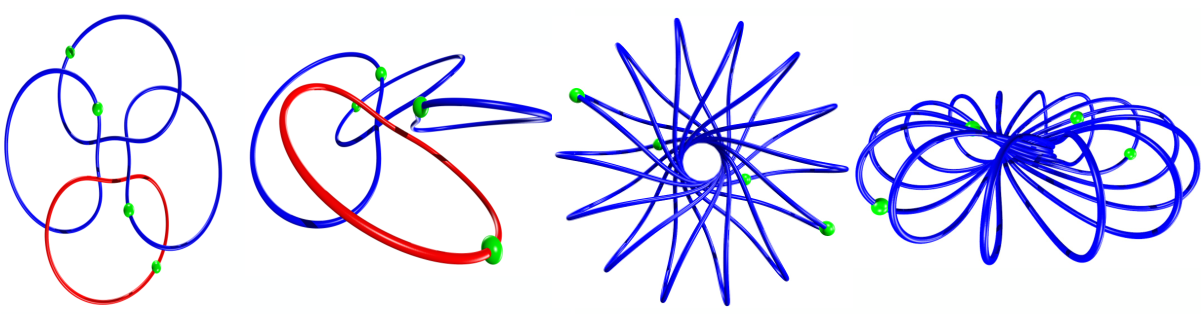}
\caption{Example result: a choreography
in the axial family for the four body 
problem ($n=4$) with $k=2$ and resonance $p:q = 14:9$.  
The bodies are shown green.  The orbit in the rotating frame 
is illustrated by the left two curves.  Far left is top down view of the orbit
projected into the $xy$ plane.  Second from left is a spatial projection, that is a 
side view of the torus.  The red loop is the segment whose existence is 
proven by studying the DDE.  The remaining three loops are obtained by symmetry. 
Since the red curve has trivial winding with respect to the $z$-axis,
the choreography is a $(14,9)$-torus knot.  In particular, 
since $p, q \neq \pm 1$ the knot is nontrivial in $\mathbb{R}^3$. 
The right two curves are the same orbit transformed back to inertial coordinates so 
that we see the torus knot choreography.  The center right frame is 
a top down view and the far right is a spatial projection of the choreography. }
\label{fig:fourBod_9_14}
\end{figure}

\begin{figure}[t!]
\centering
\includegraphics[width = 5in]{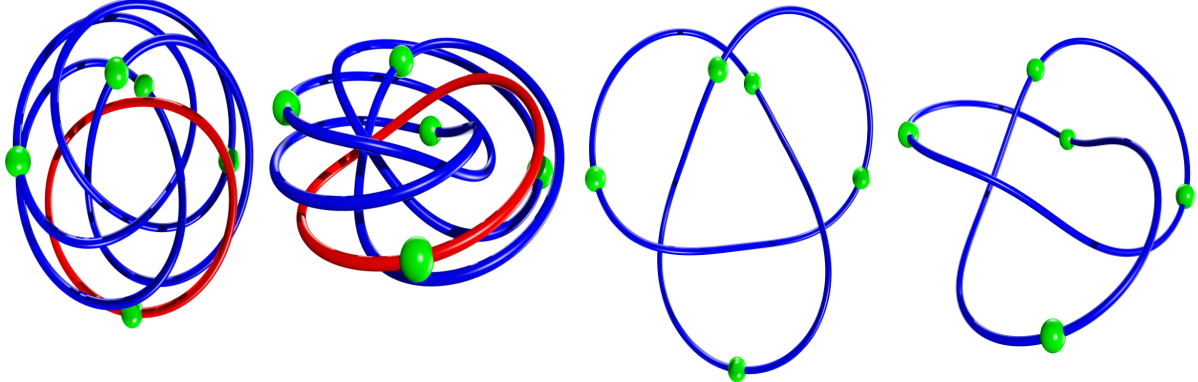}
\caption{Example result: a choreography in the axial family for the five body 
problem ($n=5$) with $k=3$ and resonance $p:q = 3:1$.  Curves from left to right have 
the same meaning as described in the caption of Figure \ref{fig:fourBod_9_14}.
Due to the nontrivial winding of the red curve around the $z$ axis, 
the choreography is a (nontrivial)
$(3,2)$-torus knot: the trefoil knot mentioned in the introduction.  See also 
Remark \ref{rem:versus}.}
 \label{fig:fiveBod}
\end{figure}

\begin{figure}[t!]
\centering
\includegraphics[width = 5.5in]{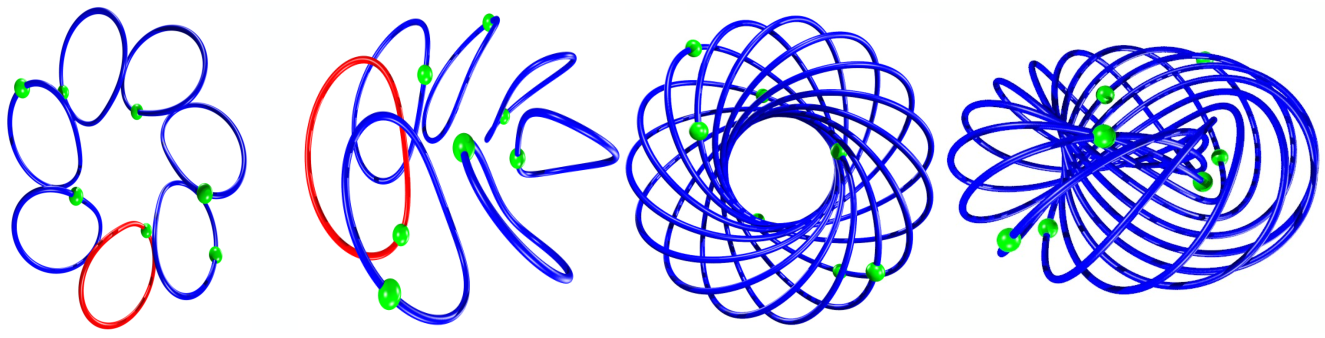}
\caption{Example result: 
a choreography in the axial family for the seven body 
problem ($n=7$) with $k=2$ and resonance $p:q=15:11$.  Curves from left to right have 
the same meaning as described in the caption of Figure \ref{fig:fourBod_9_14}.
Since the red curve has trivial winding with respect to the $z$-axis, the choreography is a 
$(15,11)$-torus knot. In particular, since $p, q \neq \pm 1$ the knot is nontrivial.}
 \label{fig:sevBod_11_15}
\end{figure}

\begin{figure}[t!]
\centering
\includegraphics[width = 5in]{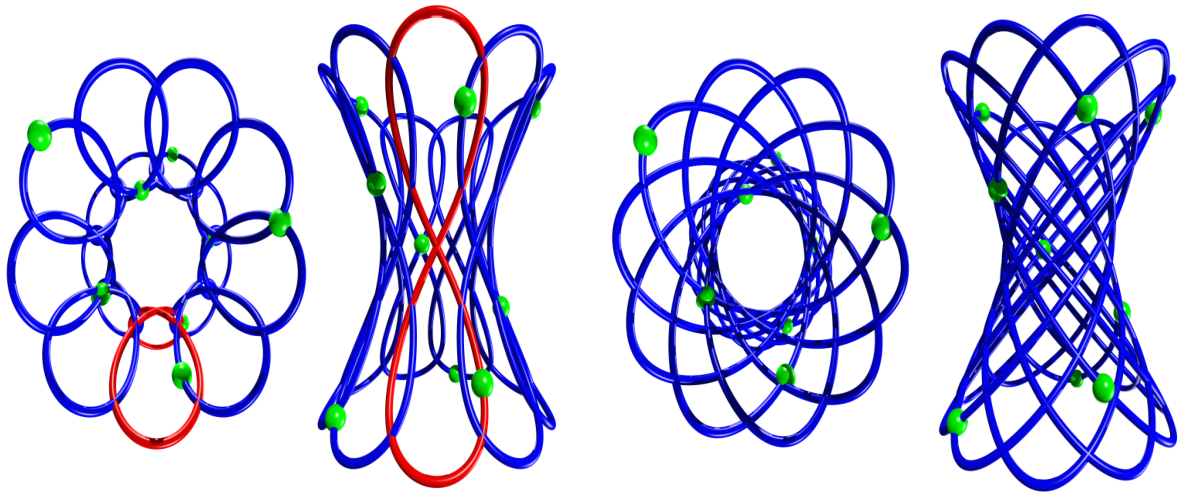}
\caption{Example result: a choreography along the Lyapunov branch for the nine body 
problem ($n=9$) with $k=7$ and resonance $p:q=10:13$.  Curves from left to right have 
the same meaning as described in the caption of Figure \ref{fig:fourBod_9_14}. In this case
the solution occurs  before the bifurcation to the axial family, hence the orbit shown here 
is \textit{not a torus knot}. Rather, the choreography 
resembles a spatial lissajous figure and illustrates the 
complexity of the vertical Lyapunov family as the number 
of bodies increases. }
 \label{fig:nineBod_13_10}
\end{figure}

\section{Acknowledgments} 
The authors wish to sincerely thank Sebius Doedel, who provided the numerical 
data for the $n$-body spatial choreographies computed in \cite{CaDoGa18}.
This data is massaged (via Fourier space Newton/continuation schemes we 
implemented in MatLab) to get appropriate approximate periodic solutions of 
the DDE, and is the starting point for all the analysis in the present manuscript.  
This material is based upon work supported by the National Science Foundation under
Grant No. 1440140, while R.C. was in residence at the Mathematical Sciences Research
Institute in Berkeley, California, during the Fall of 2018.
R.C., J.-P.L., and J.D.M.J. were partially supported by a UNAM-PAPIIT project IA102818 and IN101020. C.G.A  was partially supported by a UNAM-PAPIIT project IN115019.
J.D.M.J was partially supported by NSF grant DMS-1813501. J.-P. L. was partially supported by NSERC. 

%
%

\appendix
\section*{Appendix} 
Tables \ref{table:trefoil_data}, \ref{table:initial_conditions}, and \ref{table:proofs_data}
in this appendix contain numerical data needed in the proofs
discussed in the main body of the present work.

\begin{table}[h!]
{\fontsize{2}{7}\selectfont
\makebox[\linewidth]{
\begin{tabular}{|c|c|c|c|c|c|c|c|}
\hline $\ell$ & $Re((u_1)_\ell)$ & $Im((u_1)_\ell)$ & $Re((u_2)_\ell)$ & $Im((u_2)_\ell)$ & $Re((u_3)_\ell)$ & $Im((u_3)_\ell)$ \\
\hline $0$ & $2.365605595111259$e-01 &            $0$              &     $-2.586486484802218$e-11      &                   $0$          &               $0$          &               $0$ \\
\hline $1$ & $2.730238208518935$e-01 & $-8.574371389918268$e-04 & $9.594366126621117$e-04 & $3.055023346756821$e-01 & $3.183998216275582$e-04 & $1.013843797046923$e-01 \\
\hline $2 $ & $ 2.685276027891537$e-03 & $ -1.686650070612183$e-05 & $ -1.686650070615572$e-05 & $ -2.685276027891251$e-03 & $ -1.623160460830562$e-04 & $ -2.584195753881417$e-02 \\
\hline $3$ & $ -4.758502906990690$e-03 & $  4.483372035078380$e-05 & $ -4.483372035075908$e-05 & $ -4.758502906990715$e-03 & $  6.528204100190321$e-05 & $  6.928820000785788$e-03 \\
\hline $4$ & $  1.883378890295841$e-03 & $ -2.366033392747678$e-05 & $  3.143172370973051$e-05 & $  2.501986861440681$e-03 & $ -1.766651472817732$e-05 & $ -1.406266734076041$e-03 \\
\hline $5$ & $ -5.999006965280112$e-04 & $  9.420748338183218$e-06 & $ -1.393289282442757$e-05 & $ -8.872280378635012$e-04 & $  6.028392369542407$e-18 & $ -5.065555035359391$e-18 \\
\hline $6$ & $  8.811248455572393$e-05 & $ -1.660505953485738$e-06 & $  3.309334986825996$e-06 & $  1.756053477440146$e-04 & $  2.212344038166846$e-06 & $  1.173950188081502$e-04 \\
\hline $7$ & $  8.498774137771074$e-06 & $ -1.868635687922962$e-07 & $ -1.868635687932195$e-07 & $ -8.498774137767349$e-06 & $ -1.286693928996992$e-06 & $ -5.852034835941615$e-05 \\
\hline $8$ & $ -1.218752697705919$e-05 & $  3.062649564066910$e-07 & $ -3.062649564072555$e-07 & $ -1.218752697706007$e-05 & $  4.623904740337318$e-07 & $  1.840039558482767$e-05 \\
\hline $9$ & $  5.555183124075654$e-06 & $ -1.570568558997190$e-07 & $  1.997034024758708$e-07 & $  7.063613764706126$e-06 & $ -9.561467803603867$e-08 & $ -3.381941146020149$e-06 \\
\hline$10$ & $ -1.507114521228540$e-06 & $  4.734666605843514$e-08 & $ -7.071044766269389$e-08 & $ -2.250818294936407$e-06 & $  3.032901443588402$e-19 & $  1.295911152109998$e-19 \\
\hline$11$ & $  2.335200664238406$e-07 & $ -8.070306668417385$e-09 & $  1.593213943378485$e-08 & $  4.610077899825280$e-07 & $  1.073253754609784$e-08 & $  3.105536094276917$e-07 \\
\hline$12$ & $  2.104244568410417$e-08 & $ -7.933840614338345$e-10 & $ -7.933840610780110$e-10 & $ -2.104244568485882$e-08 & $ -5.811254007216032$e-09 & $ -1.541283763438269$e-07 \\
\hline$13$ & $ -3.290905327417571$e-08 & $  1.344313219810082$e-09 & $ -1.344313219942516$e-09 & $ -3.290905327451859$e-08 & $  1.877957666662263$e-09 & $  4.597277465765067$e-08 \\
\hline$14$ & $  1.395325137847766$e-08 & $ -6.138803968764976$e-10 & $  7.771252183448973$e-10 & $  1.766373966056443$e-08 & $ -3.685236237414536$e-10 & $ -8.376391836754665$e-09 \\
\hline$15$ & $ -3.732874395549102$e-09 & $  1.759771959433038$e-10 & $ -2.610270876784223$e-10 & $ -5.536974981491839$e-09 & $  7.195923927456951$e-20 & $  7.183652045118199$e-20 \\
\hline$16$ & $  5.929064335140302$e-10 & $ -2.981756749601868$e-11 & $  5.665698400826084$e-11 & $  1.126593914301232$e-09 & $  3.809015322039982$e-11 & $  7.574023856816297$e-10 \\
\hline$17$ & $  4.804400016695753$e-11 & $ -2.567445994432811$e-12 & $ -2.567446178131416$e-12 & $ -4.804399996475025$e-11 & $ -1.932234422136176$e-11 & $ -3.615743781797638$e-10 \\
\hline$18$ & $ -7.591409192695278$e-11 & $  4.295939763834585$e-12 & $ -4.295939829539944$e-12 & $ -7.591409223386180$e-11 & $  5.943004126841669$e-12 & $  1.050195703792485$e-10 \\
\hline$19$ & $  3.114187654313435$e-11 & $ -1.860435538023476$e-12 & $  2.361958845379412$e-12 & $  3.953689255216923$e-11 & $ -1.143913421869850$e-12 & $ -1.914799688503148$e-11 \\
\hline$20$ & $ -8.409739529661349$e-12 & $  5.289131681837649$e-13 & $ -7.792542267774140$e-13 & $ -1.239017293452956$e-11 & $  1.543621735017767$e-19 & $  7.884177072944098$e-21 \\
\hline$21$ & $  1.333788472823494$e-12 & $ -8.809217097487679$e-14 & $  1.640470271719594$e-13 & $  2.483807275096231$e-12 & $  1.097557417499826$e-13 & $  1.661794417444211$e-12 \\
\hline$22$ & $  1.036015543763710$e-13 & $ -7.169318194234388$e-15 & $ -7.169447461203913$e-15 & $ -1.036016587196382$e-13 & $ -5.347398762479633$e-14 & $ -7.727301224303588$e-13 \\
\hline$23$ & $ -1.573324226886791$e-13 & $  1.138423940937104$e-14 & $ -1.138425538834881$e-14 & $ -1.573324586440305$e-13 & $  1.631244902296707$e-14 & $  2.254425161892293$e-13 \\
\hline$24$ & $  6.514968598200342$e-14 & $ -4.919794693209673$e-15 & $  6.284792512394748$e-15 & $  8.322595890650579$e-14 & $ -3.120164368146607$e-15 & $ -4.131838042098719$e-14 \\
\hline
\end{tabular}
}
}
\caption{Fourier coefficients of the {\em trefoil} choreography of Theorem~\ref{thm:trefoil}.}
\label{table:trefoil_data}
\end{table}

\begin{table}[h!]
{\fontsize{5}{8}\selectfont
\makebox[\linewidth]{
\begin{tabular}{|c|c|c|c|c|c|c|c|}
\hline
\multicolumn{7}{|c|}{\boldmath$n=4$, $k=2$\unboldmath} \\
\hline $p:q$ & $x_0$ & $y_0$ & $z_0$ & $\dot x_0$ & $\dot y_0$ & $\dot z_0$ \\
\hline $10:9$ & 1.084581210262490 & $0.269095117967146$ & $-0.400810670225760$ & $0.389692393529414$ & $-0.222026147390220$ & $ 0.422912633683090$ \\
\hline $6:5$ & $1.188423380831879$ & $ 0.396938948763056$ & $-0.389381587037265$ & $ 0.556815395497009$ & $-0.399232075676175$ & $ 0.462209587632568$
\\
\hline $14:11$ & $1.238763513470937$ & $ 0.472974975708732$ & $-0.376434682859180$ & $ 0.671427135322320$ & $-0.523882170109271$ & $ 0.485529706955392$
\\
\hline $18:13$ & $1.282136229445568$ & $ 0.569016024076380$ & $-0.350476579202572$ & $ 0.840303451206103$ & $-0.707131207512588$ & $ 0.504911385776339$
\\
\hline $10:7$ & $1.289649221019265$ & $ 0.602140964327029$ & $-0.337606815998459$ & $ 0.906273456937495$ & $-0.778064369372037$ & $ 0.505617404253052$
\\
\hline $14:9$ & $1.283423571908586$ & $ 0.686295696005838$ & $-0.287166965555756$ & $ 1.096549119253133$ & $-0.980065341494167$ & $ 0.475381865946370$ \\
\hline \hline
\multicolumn{7}{|c|}{\boldmath$n=5$, $k=3$\unboldmath} \\
\hline $p:q$ & $x_0$ & $y_0$ & $z_0$ & $\dot x_0$ & $\dot y_0$ & $\dot z_0$ \\ 
\hline $3:1$ & 
$0.781206112370790$ & $0.001836389542086$ & $0.000409996364153$ & $0.005730260732297$ & $-2.058041218487896$ & $-0.459483910447517$ \\
\hline \hline
\multicolumn{7}{|c|}{\boldmath$n=7$, $k=2$\unboldmath} \\
\hline $p:q$ & $x_0$ & $y_0$ & $z_0$ & $\dot x_0$ & $\dot y_0$ & $\dot z_0$ \\
\hline $15:11$ & $0.640762081428200$ & $0.304226148803711$ & $-0.474444652515547$ & $0.561266315985831$ & $0.527487897552293$ & $-0.391865391782611$ \\
\hline $17:12$ & $0.579026084137708$ & $0.405193913712767$ & $-0.483263936271178$ & $0.751751082063471$ & $0.635217619217003$ & $-0.389409004841267$ \\
\hline $19:13$ & $0.542163973849064$ & $0.463250571295820$ & $-0.484847294002918$ & $0.876087261468306$ & $0.693625834061019$ & $-0.375630471181662$ \\
\hline $23:15$ & $0.501902078466474$ & $0.521778491863104$ & $-0.481735430423762$ & $1.042108767392087$ & $0.739986909755284$ & $-0.348083591542181$ \\
\hline $25:16$ & $0.490096168583210$ & $0.536730950829510$ & $-0.479345448717921$ & $1.101770886821142$ & $0.747057526841343$ & $-0.337802168545392$ \\
\hline $2:1$ & $0.388010210558313$ & $0.551393376179951$ & $-0.422655405682646$ & $1.718638435158988$ & $0.663687207742979$ & $-0.293252731479080$ \\
\hline \hline
\multicolumn{7}{|c|}{\boldmath$n=9$, $k=7$\unboldmath} \\
\hline $p:q$ & $x_0$ & $y_0$ & $z_0$ & $\dot x_0$ & $\dot y_0$ & $\dot z_0$ \\
\hline $10:13$ & $0.649289870115096$ & $0.307019901740609$ & $-0.696068546706640$ & $0.621827399858452$ & $0.185756061650385$ & $-1.139929982269243$ \\
\hline $7:10$ & $0.625045716429457$ & $0.335012846089124$ & $-0.779750789678175$ & $0.591061134121929$ & $0.198381812020731$ & $-1.161246560979217$ \\
\hline
\end{tabular}
}
}
\caption{Initial conditions for the body $u_n$ used in the computer-assisted proofs of the torus knot choreographies for different resonances $p:q$ in the $n$-body problem, for $n=4,5,7,9$.}
\label{table:initial_conditions}
\end{table}

\begin{table}[h!]
{\fontsize{5}{8}\selectfont
\makebox[\linewidth]{
\begin{tabular}{|c|c|c|c|c|}
\hline
\multicolumn{5}{|c|}{\boldmath$n=4$, $k=2$\unboldmath} \\
\hline $p:q$ & $T$ & $m$ & $\nu$ & $r$ \\ 
\hline $10:9$ & $5.780190889966491$  & $30$ & $1.1$ & $2.5 \times 10^{-12}$ \\
\hline $6:5$ & $5.352028601820825$  & $30$ & $1.1$ & $1.1\times 10^{-11}$ \\
\hline $14:11$ &$5.046198396002492$ & $30$ & $1.1$ & $5.3\times 10^{-11}$ \\
\hline $18:13$ & $4.638424788244715$  & $50$ & $1.1$ & $7.1\times 10^{-11}$ \\
\hline $10:7$ & $4.495704025529494$  & $50$ & $1.1$ & $1.2\times 10^{-9}$ \\
\hline $14:9$ & $4.128707778547495$ & $60$ & $1.04$ & $8.9\times 10^{-8}$ \\
\hline \hline
\multicolumn{5}{|c|}{\boldmath$n=5$, $k=3$\unboldmath} \\
\hline $p:q$ & $T$ & $m$ & $\nu$ & $r$ \\ 
\hline $3:1$ & $1.785209272759583$ & $25$ & $1.03$ & $4.7 \times 10^{-10}$ \\
\hline \hline
\multicolumn{5}{|c|}{\boldmath$n=7$, $k=2$\unboldmath} \\
\hline $p:q$ & $T$ & $m$ & $\nu$ & $r$ \\ 
\hline $15:11$ & $3.035064895370178$ & $20$ & $1.15$ & $4.4 \times 10^{-9}$ \\
\hline $17:12$ & $2.921452840463272$ & $20$ & $1.11$ & $2.6 \times 10^{-8}$ \\
\hline $19:13$ & $2.831759112905190$ & $40$ & $1.07$ & $8.7 \times 10^{-11}$ \\
\hline $23:15$ & $2.699168385210632$ & $40$ & $1.05$ & $7.5 \times 10^{-11}$ \\
\hline $25:16$ & $2.648783908686700$ & $40$ & $1.04$ & $5.9 \times 10^{-11}$ \\
\hline $2:1$ & $2.069362428661484$ & $50$ & $1.04$ & $2.8 \times 10^{-10}$ \\
\hline  \hline 
\multicolumn{5}{|c|}{\boldmath$n=9$, $k=7$\unboldmath} \\
\hline $p:q$ & $T$ & $m$ & $\nu$ & $r$ \\ 
\hline $10:13$ & $4.479593949184486$ & $70$ & $1.05$ & $4.5 \times 10^{-8}$ \\
\hline $7:10$ & $4.922630713389546$ & $150$ & $1.04$ & $1.9 \times 10^{-9}$ \\
\hline
\end{tabular}  
}
}
\caption{Data for the proofs of the torus knot choreographies for different resonances $p:q$ and for $n=4,5,7,9$, in the $n$-body problem.}
\label{table:proofs_data}
\end{table}

\end{document}